\newtheorem{thm}{Theorem}[section]
\newtheorem{lem}[thm]{Lemma}
\newtheorem{prop}[thm]{Proposition}
\newtheorem{cor}[thm]{Corollary}
\theoremstyle{definition}
\newtheorem{defi}[thm]{Definition}
\newtheorem{ex}[thm]{Example}
\theoremstyle{remark}
\newtheorem{rem}[thm]{Remark}
\newcommand{\op}{\normalfont{^{op}}}
\newcommand{\Mod}[1]{\operatorname{mod} #1}
\newcommand{\ind}[1]{\operatorname{ind} #1}
\newcommand{\MOD}[1]{\operatorname{Mod} #1}
\newcommand{\Hom}[3]{\operatorname{Hom}_{#1}(#2,#3) }
\newcommand{\End}[2]{\operatorname{End}_{#1}(#2) }
\newcommand{\SOC}[2]{\operatorname{Soc}_{#1} #2}
\newcommand{\Rad}[1]{\operatorname{Rad} #1}
\newcommand{\RAD}[2]{\operatorname{Rad}^{#1} #2}
\newcommand{\Ima}[1]{\operatorname{Im}#1}
\newcommand{\Ker}[1]{\operatorname{Ker}#1}
\newcommand{\KER}[1]{\operatorname{ker}#1}
\newcommand{\COIM}[1]{\operatorname{coim}#1}
\newcommand{\IMA}[1]{\operatorname{im}#1}
\newcommand{\Tr}[2]{\operatorname{Tr}\left(#1,#2\right)}
\newcommand{\Rej}[2]{\operatorname{Rej}\left(#1,#2\right)}
\newcommand{\N}{\mathbb{N}}
\newcommand{\Proj}[1]{\operatorname{proj.dim}#1}
\newcommand{\Gl}[1]{\operatorname{gl.dim}#1}
\newcommand{\rep}[1]{\operatorname{rep.dim}#1}
\newcommand{\Add}[1]{\operatorname{add}#1}
\newcommand{\LL}[1]{\operatorname{LL}(#1)}
\begin{document}

\title[GR measure, representation dimension and rejective chains]{Gabriel--Roiter measure, representation dimension and rejective chains}
\author{Teresa Conde}
\address{Institute of Algebra and Number Theory, University of Stuttgart\\ Pfaffenwaldring 57, 70569 Stuttgart, Germany}
\email{\href{mailto:tconde@mathematik.uni-stuttgart.de}{\nolinkurl{tconde@mathematik.uni-stuttgart.de}}}
\thanks{I would like to thank Steffen Koenig for the useful discussions and suggestions during the preparation of this manuscript.}
\subjclass[2010]{Primary 16S50, 16W70. Secondary 16G10, 16G20.}
\keywords{Gabriel--Roiter measure; quasihereditary rings; rejective chains; representation dimension}
\date{\today}

\begin{abstract}
The Gabriel--Roiter measure is used to give an alternative proof of the finiteness of the representation dimension for Artin algebras, a result established by Iyama in 2002. The concept of Gabriel--Roiter measure can be extended to abelian length categories and every such category has multiple Gabriel--Roiter measures. Using this notion, we prove the following broader statement: given any object $X$ and any Gabriel--Roiter measure $\mu$ in an abelian length category $\mathcal{A}$, there exists an object $X'$ which depends on $X$ and $\mu$, such that $\Gamma=\End{\mathcal{A}}{X\oplus X'}$ has finite global dimension. Analogously to Iyama's original results, our construction yields quasihereditary rings and fits into the theory of rejective chains.
 \end{abstract}

\maketitle
\section{Introduction}
The first Brauer--Thrall conjecture, proved by Roiter in 1968 (\cite{MR0238893}), asserts that every Artin algebra of bounded representation type has finite representation type. The induction scheme used in Roiter's proof prompted Gabriel to introduce a powerful invariant, known as the Gabriel–Roiter measure. The usefulness of the Gabriel--Roiter measure is not limited to the first Brauer--Thrall conjecture. In \cite{grmfringel}, Ringel has used the Gabriel--Roiter measure to give alternative proofs of a number of important results in representation theory published in the 1970's. Ringel employed the Gabriel--Roiter measure to show that every module over an Artin algebra of finite representation type is a direct sum of finitely generated modules, a result originally established in \cite{MR0379578,MR0349740,MR0349747}. In \cite{MR0424874}, Auslander proved that every Artin algebra of infinite representation type possesses an infinitely generated indecomposable module, and it turns out that this result can also be deduced using the Gabriel--Roiter measure. Furthermore, using the notion of Gabriel--Roiter inclusion, Ringel observed that every finitely generated nonsimple indecomposable module $X$ is the extension of an indecomposable module $Y$ having Gabriel--Roiter measure strictly smaller than that of $X$ by some other indecomposable module $Z$. In this paper, we provide yet another application of the Gabriel--Roiter measure. Namely, we use it to prove the finiteness of the representation dimension for Artin algebras, a result first shown by Iyama in 2002.

The representation dimension of an Artin algebra $\Lambda$ is the smallest possible global dimension of an endomorphism algebra $\End{\Lambda}{\overline{X}}$, such that the $\Lambda$-module $\overline{X}$ generates and cogenerates every $\Lambda$-module. This homological dimension was introduced by Auslander in \cite{quenmary}. He proved that the Artin algebras of finite representation type are exactly those whose representation dimension is at most 2 (\cite{quenmary}), hence representation dimension distinguishes finite from infinite representation type. All the classes of algebras for which Auslander determined the precise representation dimension turned out to have representation dimension at most 3. Thus, he asked whether the representation dimension can be greater than 3, but also, whether it always has to be finite. Although an Artin algebra may have arbitrarily large representation dimension (as shown by Rouquier in \cite{MR2231960}), this is always finite. Indeed, Iyama established in \cite{iyama} the following much stronger result. 
\begin{thm}[Iyama]
\label{thm:iyama}
Every finitely generated module $X$ over an Artin algebra $\Lambda$ has a complement $X'$ such that $\Gamma=\End{\Lambda}{X \oplus X'}$ is a quasihereditary algebra.
\end{thm}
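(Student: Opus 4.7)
The plan is to mimic Iyama's original construction of a rejective chain in $\Mod{\Lambda}$, but to replace his length-based induction by a well-founded induction driven by a Gabriel--Roiter measure $\mu$. After enlarging $X$ to contain $\Lambda$ as a summand (which does not affect the conclusion), I would build $X'$ so that $\mathcal{C}=\Add{\B{X \oplus X'}}$ carries a chain of rejective subcategories terminating at zero; by the standard correspondence between rejective chains and heredity chains of idempotents, this would force $\Gamma=\End{\Lambda}{X \oplus X'}$ to be quasihereditary.

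Concretely, I would order the indecomposable summands of $X$ by decreasing Gabriel--Roiter measure and process them from the top down. Let $m$ be the maximum value of $\mu$ currently attained and, for each indecomposable $M$ with $\mu(M)=m$, choose a right approximation $C \to M$ and a left approximation $M \to C'$ by objects whose indecomposable summands all have $\mu$-value strictly less than $m$. The crucial point, replacing the role played by length in Iyama's argument, is that every indecomposable summand of the kernel of such a right approximation, and of the cokernel of the left one, automatically has $\mu$-value strictly smaller than $m$: this is where the defining strict-decrease property of the Gabriel--Roiter measure on Gabriel--Roiter submodules of an indecomposable enters. These new summands are then adjoined to $X'$ and the procedure is repeated at the next $\mu$-stratum.

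Termination holds because $\mu$ takes only finitely many values on indecomposables of bounded length and because the lengths of all indecomposable summands that appear during the process are bounded by the lengths of the original summands of $X$; after finitely many rounds the procedure exhausts all possible $\mu$-values. The resulting descending chain $\mathcal{C} \supset \mathcal{C}_{<m_1} \supset \mathcal{C}_{<m_2} \supset \cdots \supset 0$, where $\mathcal{C}_{<m_i}$ is the additive closure of indecomposables with $\mu$-value below $m_i$, is by construction a rejective chain: at each stage the prescribed left and right approximations lie in the next subcategory.

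The main obstacle I anticipate is the termination and finiteness step. One has to show not only that each individual round produces objects of strictly smaller $\mu$, but that across the entire procedure only finitely many isomorphism classes of indecomposables ever appear, which demands combining the well-foundedness of $\mu$ with a uniform bound on the lengths of the modules encountered as kernels and cokernels of approximations. A second subtlety is ensuring that an approximation statement about possibly decomposable objects really does descend to a strict decrease of $\mu$ on each indecomposable summand, since the Gabriel--Roiter measure is intrinsically a notion about indecomposables while the approximation machinery is not.
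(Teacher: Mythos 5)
Your overall strategy --- stratify $\ind{(\Mod{\Lambda})}$ by a Gabriel--Roiter measure and build a rejective chain terminating at zero --- is indeed the paper's, but as written your argument has two genuine gaps. The first and most serious is that you only arrange (or rather, only claim) that each $\mathcal{C}_{<m_i}$ is rejective in $\mathcal{C}$; a merely rejective chain yields a standardly stratified ring, not a quasihereditary one. For quasiheredity the chain must be \emph{total}: every nonisomorphism between indecomposables lying in the \emph{same} $\mu$-stratum (including radical endomorphisms of a single indecomposable) must factor through the next subcategory. This is precisely where the real work with the measure happens: if $f\colon Y\to Z$ is a nonisomorphism between indecomposables with $\mu(Y)=\mu(Z)$, then $Y$ and $Z$ have the same length by Lemma \ref{lem:propgrm}(2), so $f$ cannot be epic, and a direct-summand argument then forces $\mu(\Ima{f})<\mu(Y)$, whence $f$ factors through the canonical left approximation. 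Your proposal never discusses morphisms within a stratum, so the correspondence between rejective chains and heredity chains that you invoke does not yet apply.

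The second gap is that the two-sided design does not fit the Gabriel--Roiter measure. The class $\mathcal{A}(<m)$ of indecomposables of measure smaller than $m$ is closed under cogeneration (submodules of finite direct sums), not under generation, so it naturally receives \emph{epic left} approximations: the correct map is the canonical epic $Y\longrightarrow Y/\Rej{Y}{\mathcal{A}(<m)}$, whose codomain automatically has measure $<m$ and whose cokernel is zero --- there is nothing to adjoin on that side, and your claim that the cokernel of an arbitrary left approximation has smaller measure is unjustified, since $\mu$ is not monotone under quotients (over the Kronecker algebra the regular length-two quotient of $P_1$ has measure $\{1,2\}>\{1,3\}=\mu(P_1)$). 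Dually, a monic right approximation would have to be the inclusion of the trace $\Tr{\mathcal{A}(<m)}{M}$, which can be all of $M$ (again over the Kronecker algebra, a regular module of length two is generated by $P_1$, which has strictly smaller measure), so the right-rejective half of your chain can simply fail; that direction belongs to the Gabriel--Roiter comeasure. Once the right-hand side is dropped and the complement is taken to be the successive proper quotients $X_{i+1}=X_i/\Rej{X_i}{\mathcal{A}(<\mu(X_i))}$ of $X$ itself, the termination issue you worry about evaporates: the sequence stabilises at $0$ after at most $|X|$ steps, and no uniform bound on the lengths of kernels or cokernels of approximations is needed.
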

The algebra $\Gamma$ in the statement of the theorem must have finite global dimension, as this is the case for all quasihereditary algebras (\cite{Moosonee}). By taking $X$ to be a generator-cogenerator of the module category, it follows as a corollary that all Artin algebras have finite representation dimension. 

Our main goal is to give a new proof of Theorem \ref{thm:iyama} by using the Gabriel--Roiter measure to determine an adequate complement $X'$ of $X$. Using the notion of ``weighted Gabriel--Roiter measure'' (see \cite{MR2346934} and \cite[Appendix, §3]{grmfringel}), one is actually able to constructively define multiple complements $X'$ of a fixed module $X$ such that the resulting Artin algebra $\Gamma$ is quasihereditary. Although our formulae for the complement $X'$ differ from that of Iyama and produce, in general, different quasihereditary algebras $\Gamma$, both constructions share many similarities. Akin to Iyama's original construction, our construction is also associated to rejective chains of subcategories, a notion developed in \cite{2003math.....11281I}. Thus, our setup too produces quasihereditary algebras which satisfy especially nice properties. To be precise, we obtain left strongly quasihereditary algebras, as defined in \cite{RingelIyama}. 

The notion of ``weighted Gabriel--Roiter measure'' is valid for any abelian length category and every such category possesses various Gabriel--Roiter measures. In its most general form, our main result reads as follows.
\begin{thm}[Part of Corollary \ref{cor:finrepdim}]
Let $\mathcal{A}$ be an abelian length category. Given any object $X$ and any Gabriel--Roiter measure $\mu$ in $\mathcal{A}$, there exists an object $X'$ which depends on $X$ and $\mu$, such that $\Gamma=\End{\mathcal{A}}{X\oplus X'}$ is left strongly quasihereditary. As a result, $\Gamma$ has finite global dimension.
\end{thm}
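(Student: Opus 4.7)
The plan is to mirror Iyama's strategy while letting the Gabriel--Roiter measure $\mu$ dictate the construction. By the characterisation of left strongly quasihereditary algebras via right rejective chains \cite{RingelIyama} alluded to in the introduction, it is enough to exhibit a descending chain
$\Add{(X \oplus X')} = \mathcal{C}_0 \supsetneq \mathcal{C}_1 \supsetneq \cdots \supsetneq \mathcal{C}_n = 0$,
in which each $\mathcal{C}_{k+1}$ is obtained from $\mathcal{C}_k$ by deleting a single indecomposable $Y_k$ and the inclusion $\mathcal{C}_{k+1}\hookrightarrow\mathcal{C}_k$ admits a right adjoint. This forces $\Gamma=\End{\mathcal{A}}{X \oplus X'}$ to be left strongly quasihereditary, hence of finite global dimension. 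Concretely, admitting a right adjoint amounts to $Y_k$ having a right $\mathcal{C}_{k+1}$-approximation.

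First I would totally order the indecomposable summands of $X \oplus X'$ by strictly decreasing $\mu$-value (ties broken arbitrarily) and remove them in that order, so that every indecomposable in $\mathcal{C}_{k+1}$ has $\mu$-value strictly smaller than $\mu(Y_k)$. The candidate for the right $\mathcal{C}_{k+1}$-approximation of $Y_k$ is supplied by Ringel's Gabriel--Roiter inclusion theorem recalled in the introduction: when $Y_k$ is non-simple there is a short exact sequence
\[
0 \to Y_k' \to Y_k \to Z_k \to 0
\]
with $Y_k'$ and $Z_k$ indecomposable and $\mu(Y_k') < \mu(Y_k)$, having the characterising property that every morphism $N \to Y_k$ from an indecomposable $N$ with $\mu(N) < \mu(Y_k)$ factors through the GR inclusion $Y_k' \hookrightarrow Y_k$. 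For simple $Y_k$ one takes $Y_k' = 0$.

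This dictates the definition of $X'$. Starting from the indecomposable summands of $X$, I would iteratively adjoin, for every indecomposable $Y$ already collected, its $\mu$-Gabriel--Roiter predecessor $Y'$ (and, in the weighted variant \cite{MR2346934, grmfringel}, the further indecomposables arising in the associated GR filtration). The iteration terminates because each GR predecessor has strictly smaller length than its successor and lengths in $\mathcal{A}$ are finite, so the process stabilises at simple objects. Setting $X'$ to be the direct sum (with multiplicity one) of all indecomposables so produced that are not already summands of $X$, the inequality $\mu(Y_k') < \mu(Y_k)$ guarantees that $Y_k' \in \mathcal{C}_{k+1}$ whenever $Y_k$ is to be removed, and the GR inclusion $Y_k' \hookrightarrow Y_k$ is the expected approximation.

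The main obstacle is verifying the universal property of $Y_k' \hookrightarrow Y_k$ for an arbitrary source in $\mathcal{C}_{k+1}$ rather than only for indecomposable ones. Given $f\colon M \to Y_k$ with $M \in \mathcal{C}_{k+1}$, Krull--Schmidt in $\mathcal{A}$ reduces the question to each indecomposable summand of $M$; such a component $N \to Y_k$ factors through its image, which is a proper subobject of $Y_k$ and therefore has $\mu$-value strictly smaller than $\mu(Y_k)$ by monotonicity of $\mu$ along subobject inclusions, after which Ringel's universal property of the GR inclusion supplies the desired factorisation through $Y_k' \hookrightarrow Y_k$. Assembling these factorisations yields the right rejective chain, and the Ringel--Iyama criterion delivers left strong quasiheredity of $\Gamma$.
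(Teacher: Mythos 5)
Your plan hinges on the claim that the Gabriel--Roiter inclusion $Y_k'\hookrightarrow Y_k$ is a right $\mathcal{C}_{k+1}$-approximation of $Y_k$, via a ``universal property'' asserting that every morphism $N\to Y_k$ from an indecomposable $N$ with $\mu(N)<\mu(Y_k)$ factors through $Y_k'\hookrightarrow Y_k$. No such property holds, and this is a fatal gap. A Gabriel--Roiter submodule is only determined up to its measure, not as a subobject: over the Kronecker algebra the projective $P_1$ (dimension vector $(1,2)$) contains a whole family of simple submodules isomorphic to $L_2$, any one of which is a GR submodule, and the inclusion of a second copy of $L_2$ does not factor through a fixed first copy. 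Ringel's main property of GR inclusions points in the opposite direction (morphisms \emph{out of} the GR submodule into modules of small measure extend to $Y$); it says nothing about morphisms \emph{into} $Y$. A genuine right $\mathcal{C}_{k+1}$-approximation must have image containing the full trace $\Tr{\mathcal{C}_{k+1}}{Y_k}$ (in the Kronecker example, $\operatorname{rad}P_1\cong L_2^2$, not a single $L_2$), and arranging for that trace to lie in $\Add{(X\oplus X')}$ is exactly Iyama's original construction (Example \ref{ex:iyama}), which makes no use of the Gabriel--Roiter measure. There is also a chirality error: a chain whose inclusions admit right adjoints with monic counits is a \emph{right} rejective chain, which by Theorem \ref{thm:reject} makes $\Gamma\op$ (not $\Gamma$) left strongly quasihereditary.

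The paper sidesteps precisely this obstruction by working dually: it forms \emph{quotients} $X_{i+1}=X_i/\Rej{X_i}{\mathcal{A}(<\mu(X_i))}$ and obtains a \emph{left} rejective chain, with the epic $\pi\colon Y\to Y/\Rej{Y}{\mathcal{A}(<I)}$ as the left approximation. This works because $\mathcal{A}(<I)$ is closed under cogeneration (parts (1) and (3) of Lemma \ref{lem:propgrm}), so the reject-quotient is simultaneously the largest quotient cogenerated by $\mathcal{A}(<I)$ and the largest quotient of measure $<I$, and every map from $Y$ to an object of smaller measure factors through it; the dual statement for traces and submodules fails because $\mathcal{A}(<I)$ is not closed under generation. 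If you want to salvage your approach, you must either switch to the quotient/reject construction, or replace the GR inclusion by the trace and accept that the measure no longer controls the construction.
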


The complete statement of Corollary \ref{cor:finrepdim} provides, in particular, a recipe for constructing distinct generators-cogenerators $\overline{X}$ of $\Mod{\Lambda}$ whose endomorphism algebra $\Gamma=\End{\Lambda}{\overline{X}}$ has finite global dimension and gives an upper bound for the global dimension of the resulting algebra $\Gamma$. For this reason, our main result is potentially useful to compute not only the representation dimension, but also the rigidity dimension of Artin algebras, a notion recently introduced in \cite{2017arXiv170608301C}. 

The layout of this paper is the following. Section \ref{sec:first} contains a direct proof of Theorem \ref{thm:iyama} using the Gabriel--Roiter measure. In Section \ref{sec:second}, we give a succinct account of Iyama's rejective chain theory and its relation to quasihereditary algebras. Finally, in Section \ref{sec:third}, we explain how the proof in Section \ref{sec:first} fits into the general setup described in Section \ref{sec:second}. In this last section, we use the broader notion of Gabriel--Roiter measure studied in \cite{MR2346934} to prove Corollary \ref{cor:finrepdim}, a refined version of Theorem \ref{thm:iyama} for abelian length categories. In particular, we describe how to obtain different complements $X'$ of a fixed finitely generated module $X$ so that $\Gamma=\End{\Lambda}{X \oplus X'}$ has finite global dimension, and provide upper bounds for the projective dimension of the simple $\Gamma$-modules and the global dimension of $\Gamma$.
\subsection*{Notation and conventions}
Throughout this paper, we deal with two types of rings: semiprimary rings and Artin algebras. A unital ring $\Gamma$ is \emph{semiprimary} if its Jacobson radical $\Rad{\Gamma}$ is a nilpotent ideal and $\Gamma / \Rad{\Gamma}$ is semisimple artinian (see \cite[§28]{MR1245487}). Every semiprimary ring $\Gamma$ has finitely many simple modules up to isomorphism, and every $\Gamma$-module has finite Loewy length. However, a semiprimary ring need not have finite length as a module over itself. A unital algebra over a commutative unital artinian ring $k$ is called an \emph{Artin algebra} if it is finitely generated as a $k$-module (see \cite{MR1476671} for details). Every Artin algebra is a semiprimary ring. 

All the modules considered will be left modules. The letter $\Lambda$ shall always denote an Artin algebra over $k$ and $D$ represents the standard duality for Artin algebras. The notation $\MOD{\Lambda}$ will be used for the category of $\Lambda$-modules and $\Mod{\Lambda}$ shall denote the category of finitely generated $\Lambda$-modules. 

All the categories are assumed to be locally small. Given an additive category $\mathcal{A}$, let $\ind{\mathcal{A}}$ be the class containing one representative from each isomorphism class of indecomposable objects in $\mathcal{A}$. Whenever convenient, assume that the elements of $\ind{\mathcal{A}}$ are isomorphism classes of indecomposable objects. For a class $\Theta$ of objects in $\mathcal{A}$, $\Add{\Theta}$ is the full subcategory of $\mathcal{A}$ whose objects are summands of finitary direct sums (i.e.~biproducts) of objects in $\Theta$. If $\Theta=\{X\}$, we write $\Add{X}$ instead of $\Add{\{X\}}$. The notation $\mathbb{N}$ is used for the set natural numbers starting from 1 and $\mathbb{N}_0$ denotes $\mathbb{N} \cup \{0\}$. Lastly, the symbol $\subset$ is used for strict inclusions.
\section{Finiteness of the representation dimension using the Gabriel--Roiter measure}
\label{sec:first}
Within this section, the category $\Mod{\Lambda}$ of finitely generated modules over the Artin algebra $\Lambda$ is simply denoted by $\mathcal{A}$. Note that $\ind \mathcal{A}$ is a set in this case. Given $X$ in $\mathcal{A}$, $|X|$ shall denote the composition length of $X$. 

The Gabriel--Roiter measure of a module $X$ in $\MOD{\Lambda}$ is a set of natural numbers associated to $X$ and its definition relies on a particular total order $\leq$ defined on the power set $\mathcal{P}(\mathbb{N})$ of $\mathbb{N}$. Given two distinct subsets $I$ and $J$ of $\mathbb{N}$, we write $I <J$ if $\min ((I\setminus J)\cup (J\setminus I)) \in J$. This relation defines a \emph{complete} total order on $\mathcal{P}(\mathbb{N})$, meaning that every nonempty subset of $\mathcal{P}(\mathbb{N})$ has a supremum (see \cite[Section 1]{RINGEL2005726}).

\begin{defi}[\cite{MR0340377,RINGEL2005726}]
The \emph{Gabriel--Roiter measure} $\mu (X)$ of a module $X$ in $\MOD{\Lambda}$ is the supremum (taken over $(\mathcal{P}(\mathbb{N}), \leq)$) of the set whose elements are the sets $\{|Y_1|,\ldots,|Y_t|\}$ for which $Y_1 \subset \cdots \subset Y_t$ is a chain of finitely generated indecomposable submodules of $X$.
\end{defi}

It is clear from the definition that the Gabriel--Roiter measure is invariant under isomorphisms. Lemma \ref{lem:propgrm} summarises some of the main properties of the Gabriel--Roiter measure. We refer to \cite[Section 1]{RINGEL2005726} for further details.
\begin{lem}
\label{lem:propgrm}
Let $X$, $Y$ and $Y_1,\ldots,Y_t$ be in $\MOD{\Lambda}$. The following hold:
\begin{enumerate}
\item if $X$ is a submodule of $Y$, then $\mu(X) \leq \mu (Y)$;
\item if $X$ and $Y$ lie in $\ind \mathcal{A}$ and $\mu(X)=\mu(Y)$, then $|X|=|Y|$;
\item $\mu(\bigoplus_{i=1}^t Y_i)=\max \{\mu (Y_i)\mid i =1, \ldots ,t\}$.
\end{enumerate}
\end{lem}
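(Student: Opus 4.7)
Part (1) I would dispatch directly from the definition: a chain of indecomposable submodules of $X$ is a fortiori a chain of indecomposable submodules of $Y$, so the set whose supremum defines $\mu(X)$ is contained in the analogous set for $Y$, and the supremum is monotone, giving $\mu(X) \leq \mu(Y)$.

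For part (2), my plan is first to establish that $|X| = \max \mu(X)$ for every indecomposable $X$. Given any chain $Y_1 \subset \cdots \subset Y_t$ of indecomposable submodules of $X$ with $Y_t \subsetneq X$, one may extend it to $Y_1 \subset \cdots \subset Y_t \subset X$, which is a valid chain because $X$ is itself indecomposable. The corresponding sets $\{|Y_1|,\ldots,|Y_t|\}$ and $\{|Y_1|,\ldots,|Y_t|,|X|\}$ have symmetric difference $\{|X|\}$, whose minimum lies in the larger set, so the extended chain strictly dominates in the order on $\mathcal{P}(\mathbb{N})$. Hence the supremum is realised only by chains terminating at $X$, which forces $|X| \in \mu(X)$; and since every element of $\mu(X)$ is the length of a submodule of $X$, none can exceed $|X|$. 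Part (2) follows by comparing maxima: if $X, Y$ are indecomposable with $\mu(X) = \mu(Y)$, then $|X| = \max \mu(X) = \max \mu(Y) = |Y|$.

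For part (3), the inequality $\geq$ is immediate from (1). For the reverse, I would reduce to $t = 2$ by induction on $t$, and then, using the identity $\mu(Y_1 \oplus Y_2) = \sup_{Z} \mu(Z)$ ranging over indecomposable submodules $Z \subseteq Y_1 \oplus Y_2$, reduce further to showing $\mu(Z) \leq \max(\mu(Y_1),\mu(Y_2))$ for every such $Z$. This I would prove by induction on $|Y_1| + |Y_2|$ with a case analysis on the projections $p_i\colon Z \to Y_i$: if some $p_i$ is injective, $Z$ embeds into $Y_i$ and (1) closes the case; if some $p_i(Z) \subsetneq Y_i$, then $Z$ lies inside the strictly smaller direct sum $p_i(Z) \oplus Y_j$ and the inductive hypothesis applies.

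The hard part will be the remaining configuration in which both projections are simultaneously surjective and non-injective---an indecomposable subdirect product with $|Z| > \max(|Y_1|, |Y_2|)$---for which neither of the previous reductions is available. Here the non-zero kernels $K_i = \ker p_i \subseteq Y_{3-i}$ satisfy $K_1 \cap K_2 = 0$, producing a decomposable proper submodule $K_1 \oplus K_2 \subsetneq Z$. My plan is to invoke the Gabriel--Roiter filtration of $Z$, writing $\mu(Z) = \mu(W) \cup \{|Z|\}$ for an indecomposable proper submodule $W \subsetneq Z$ maximising $\mu(W)$, and combine this with the combinatorial fact that $A \cup \{n\} < B$ holds whenever $A < B$ strictly and $n$ exceeds $\max(A \cup B)$; since $|Z|$ exceeds both $\max \mu(W) = |W|$ and $\max \mu(Y_i) \leq \max(|Y_1|, |Y_2|)$, a strict inequality $\mu(W) < \max \mu(Y_i)$ would upgrade to $\mu(Z) = \mu(W) \cup \{|Z|\} < \max \mu(Y_i)$. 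The genuine subtlety, and where I expect the bulk of the work to lie, is in securing this strictness for the maximising $W$, ruling out a borderline configuration in which $W$ saturates $\max\mu(Y_i)$ by exploiting the structural constraint that $W$ is a proper submodule of $Z$ with both projection kernels non-trivial.
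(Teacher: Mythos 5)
The paper does not prove this lemma at all: it is quoted from the literature, with a pointer to Section 1 of Ringel's \emph{The Gabriel--Roiter measure}, so there is no in-paper argument to compare against. On its own terms, your treatment of parts (1) and (2) is correct and complete: the monotonicity of the supremum gives (1), and your extension argument (the set of candidate chains is finite, so the supremum is attained, and any maximal chain must terminate at $X$ itself because appending $|X|$ strictly increases the value in the order on $\mathcal{P}(\mathbb{N})$) correctly yields $\max \mu(X)=|X|$ and hence (2).

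Part (3) is where the proof is genuinely incomplete. Your reductions (to $t=2$, to indecomposable $Z\subseteq Y_1\oplus Y_2$, and the cases where some $p_i$ is injective or some $p_i(Z)\subsetneq Y_i$) are sound, but the remaining case --- $Z$ an indecomposable subdirect product with both projections surjective and non-injective --- is not closed; you have restated the difficulty rather than resolved it. Two concrete problems. First, your induction is on $|Y_1|+|Y_2|$, but the module $W\subsetneq Z$ you need to control sits inside the \emph{same} ambient sum, so the inductive hypothesis does not apply to it; you need to induct (also) on $|Z|$, using $|W|<|Z|$. Second, and more seriously, even with the corrected induction the hypothesis only yields $\mu(W)\leq\max_i\mu(Y_i)$, whereas your $\mu(Z)=\mu(W)\cup\{|Z|\}$ argument requires \emph{strict} inequality. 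Ruling out the borderline case $\mu(W)=\max_i\mu(Y_i)$ is precisely the content of the second clause of Ringel's ``main property'', and the standard way to obtain it is to strengthen the inductive statement to: \emph{if $\mu(W)=\max_i\mu(Y_i)$ for an indecomposable submodule $W\subseteq Y_1\oplus Y_2$, then some projection $p_j|_W\colon W\to Y_j$ is a split monomorphism}. Granting that clause, the borderline case dies: a split mono into the indecomposable $Y_j$ forces $p_j|_W$ to be an isomorphism, so $Y_1\oplus Y_2=W\oplus Y_{3-j}$ with $W\subseteq Z$, and the modular law gives $Z=W\oplus(Z\cap Y_{3-j})$, contradicting the indecomposability of $Z$ and $W\subsetneq Z$. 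Without building this extra clause into the induction (and proving it in each case of your case analysis), the argument does not close; as written, the hardest step of the lemma is left as an acknowledged placeholder.
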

\begin{rem}
We will deal with the Gabriel--Roiter measure of finitely generated modules. However, this invariant is equally well suited to study infinitely generated modules, as observed in \cite{RINGEL2005726,grmfringel}.
\end{rem}

Given $I \subseteq \N$, let $\mathcal{A}(< I)$ be the set of all $Y$ in $\ind{\mathcal{A}}$ whose Gabriel--Roiter measure $\mu(Y)$ is strictly smaller than $I$. According to parts (1) and (3) of Lemma \ref{lem:propgrm}, the set $\mathcal{A}(< I)$ is \emph{closed under cogeneration}, i.e.~the category $\Add{\mathcal{A}(< I)}$ is closed under taking submodules.

For our proof of Theorem \ref{thm:iyama}, we need the notion of trace and reject. Let $X$ be in $\mathcal{A}$ and consider a set $\Theta$ of modules in $\MOD{\Lambda}$. The \emph{reject} of $X$ in $\Theta$ is given by
\[\Rej{X}{\Theta}=\bigcap_{\substack{f \in \Hom{\Lambda}{X}{T},\\ T\in \Theta}} \Ker{f}.\]
The factor module $X/\Rej{X}{\Theta}$ is cogenerated by $\Theta$ and it is indeed the largest quotient of $X$ satisfying such property. The \emph{trace} of $\Theta$ in $X$ is defined by
\[\Tr{\Theta}{X}=\sum_{\substack{f \in \Hom{\Lambda}{T}{X},\\ T\in \Theta}} \Ima{f}.\]
This is the largest submodule of $X$ generated by $\Theta$. Both $\Rej{-}{\Theta}$ and $\Tr{\Theta}{-}$ are subfunctors of the identity functor on $\mathcal{A}$, so they are additive functors and therefore preserve finitary direct sums. We will often consider the situation $\Theta=\{T\}$. In this case, we simply write $\Rej{X}{T}$ and $\Tr{T}{X}$ instead of $\Rej{X}{\{T\}}$ and $\Tr{\{T\}}{X}$.

Given $X \in \mathcal{A}$, we construct recursively a sequence $(X_i)_{i\in \N}$ of quotients of $X$ by setting $X_1=X$ and defining
\[
X_{i+1}=X_i/\Rej{X_i}{\mathcal{A}(<  \mu(X_i))}.
\] 

\begin{lem}
\label{lem:obvi}
If $X_i\neq 0$, then $\mu(X_{i+1})<\mu(X_i)$ and $X_{i+1}$ is a proper quotient of $X_i$. In particular, there exists $\ell \in \N_0$ such that $X_i=0$ for all $i > \ell$.
\end{lem}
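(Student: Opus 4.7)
My plan is to first establish the strict inequality $\mu(X_{i+1}) < \mu(X_i)$; the remaining assertions of the lemma will then follow with little effort.

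By construction, $X_{i+1} = X_i / \Rej{X_i}{\mathcal{A}(<\mu(X_i))}$ is the largest quotient of $X_i$ cogenerated by the class $\mathcal{A}(<\mu(X_i))$. The main step will be to produce an embedding of $X_{i+1}$ into a \emph{finite} direct sum $\bigoplus_{k=1}^s T_k$ with each $T_k \in \mathcal{A}(<\mu(X_i))$. To obtain such an embedding I would use that $X_{i+1}$ has finite length, so its socle is a finite direct sum of simples $S_1,\ldots,S_s$; for each $S_k$ one may select a morphism $f_k : X_{i+1} \to T_k$ with $T_k \in \mathcal{A}(<\mu(X_i))$ that does not vanish on $S_k$ (such a map exists because $X_{i+1}$ is already cogenerated by $\mathcal{A}(<\mu(X_i))$, so no nonzero submodule can be annihilated by every map into an object of the class). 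The combined morphism $(f_1,\ldots,f_s) : X_{i+1} \to \bigoplus_k T_k$ would then be nonzero on the entire socle of $X_{i+1}$, hence injective. Lemma \ref{lem:propgrm}(1) and (3) would then yield
\[ \mu(X_{i+1}) \;\leq\; \mu\!\left(\bigoplus_{k=1}^s T_k\right) \;=\; \max_{k} \mu(T_k) \;<\; \mu(X_i), \]
which is the desired inequality.

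From this point on, the argument is formal. Properness of the quotient $X_i \twoheadrightarrow X_{i+1}$ follows immediately, because $\mu$ is an isomorphism invariant: if the quotient map were an isomorphism then $\mu(X_{i+1}) = \mu(X_i)$, contradicting the strict inequality. For the final assertion I would note that properness forces $|X_{i+1}| \leq |X_i| - 1$ whenever $X_i \neq 0$, so a direct induction gives $|X_i| \leq |X| - (i-1)$ as long as $X_1,\ldots,X_{i-1}$ are all nonzero; choosing $\ell = |X|$ then guarantees $X_i = 0$ for every $i > \ell$.

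The one slightly delicate point I foresee is exactly the passage from cogeneration of $X_{i+1}$ by $\mathcal{A}(<\mu(X_i))$ (which a priori only provides an embedding into some arbitrary product) to an embedding into a \emph{finite} direct sum, so that part (3) of Lemma \ref{lem:propgrm} can be applied. The socle trick above handles this; everything else in the argument is routine.
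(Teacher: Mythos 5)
Your overall strategy coincides with the paper's: the paper's proof likewise rests on the observation that $X_{i+1}$ is the largest quotient of $X_i$ cogenerated by $\mathcal{A}(<\mu(X_i))$ and then invokes parts (1) and (3) of Lemma \ref{lem:propgrm}; it simply leaves implicit the reduction to a \emph{finite} direct sum, which you correctly single out as the one delicate point. Your handling of properness of the quotient and of the termination of the sequence is fine.

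However, the socle trick as you state it does not quite work. Choosing, for each simple summand $S_k$ of the socle, a morphism $f_k$ that does not vanish on $S_k$ does not force the combined map $(f_1,\ldots,f_s)$ to be injective on the socle: if the socle contains an isotypic component $S\oplus S$ of multiplicity two, the chosen maps could restrict to $(x,y)\mapsto x-y$ and $(x,y)\mapsto y-x$ (followed by embeddings of $S$ into the targets); each is nonzero on its designated copy of $S$, yet both vanish on the diagonal copy $\{(x,x)\}$, which therefore lies in the kernel of the combined map. So ``nonzero on each $S_k$'' is strictly weaker than ``injective on the socle''. The repair is immediate and uses only what you already have: since $X_{i+1}$ has finite length, the family of submodules $\Ker{f_1}\cap\cdots\cap\Ker{f_s}$, as $(f_1,\ldots,f_s)$ ranges over finite tuples of morphisms into objects of $\mathcal{A}(<\mu(X_i))$, has a minimal element by the descending chain condition, and this minimal element must be $0$, since otherwise your non-annihilation observation (that $\Rej{X_{i+1}}{\mathcal{A}(<\mu(X_i))}=0$) would produce a further morphism shrinking it. Any such tuple yields the desired embedding $X_{i+1}\hookrightarrow\bigoplus_{k=1}^{s}T_k$, and the rest of your argument goes through unchanged. (Alternatively, iterate your socle idea: as long as the map built so far has nonzero kernel, pick a simple subobject of that kernel, adjoin one more morphism not vanishing on it, and use finite length to terminate.)
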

\begin{proof}
By definition, $X_{i+1}$ is the largest quotient of $X_i$ which is cogenerated by $\mathcal{A}(< \mu (X_i))$. Using parts (1) and (3) of Lemma \ref{lem:propgrm}, we conclude that $\mu(X_{i+1})< \mu(X_i)$. Hence $X_{i+1}$ is a proper factor of $X_i$. Since $X_1$ is finitely generated, the sequence $(X_i)_{i\in \N}$ must eventually stabilise in $0$. 
\end{proof}

Let $\overline{X}=\bigoplus_{i\in \N} X_i$ -- by Lemma \ref{lem:obvi} this module lies in $\mathcal{A}$. Consider the Artin algebra
\[\Gamma=\End{\Lambda}{\overline{X}}.\]
We prove, in a few steps, that $\Gamma$ is quasihereditary. In order to do so, we must endow the set of simple $\Gamma$-modules with a suitable poset structure.

Let $\Phi$ be the set containing one representative from each isomorphism class of indecomposable summands of $\overline{X}$. The projective indecomposable $\Gamma$-modules are given by $P_{Y}=\Hom{\Lambda}{Y}{\overline{X}}$ for $Y \in \Phi$, hence $\Phi$ labels the simple $\Gamma$-modules. We denote the top of $P_{Y}$ by $L_{Y}$ and endow $\Phi$ with a partial order $\unlhd$ by setting $Y \lhd Z$ if $\mu(Y)>\mu(Z)$. Note that $Y,Z \in \Phi$ are not comparable if $\mu(Y)=\mu(Z)$ but $Y\neq Z$ (i.e.~$Y\not\cong Z$).

The definition of quasihereditary (Artin) algebra is valid, more generally, for any semiprimary ring. Within this section, we only deal with Artin algebras. However, in Sections \ref{sec:second} and \ref{sec:third}, we work with semiprimary quasihereditary rings. For this reason, Definition \ref{defi:qh} is stated in terms of semiprimary rings.
\begin{defi}[\cite{MR1211481,MR987824,RingelIyama}]
\label{defi:qh}
Let $(\Phi,\unlhd)$ be a poset labelling the simple modules over some semiprimary ring $\Gamma$ and denote by $P_Y$ the projective indecomposable module whose (simple) top $L_Y$ corresponds to the label $Y\in \Phi$.
\begin{enumerate}
\item The \emph{standard module} $\Delta_Y$, $Y \in \Phi$, is the largest quotient of $P_Y$ satisfying $\Hom{\Lambda}{P_Z}{\Delta_Y}\neq 0$ only when $Z\unlhd Y$. In other words, $\Delta_Y=P_Y/U_Y$, where $U_Y=\Tr{\{P_Z \mid  Z \ntrianglelefteq Y\}}{P_Y}$.
\item The ring $\Gamma$ is \emph{quasihereditary} with respect to $(\Phi, \unlhd)$ if the following hold for every $Y \in \Phi$:
\begin{enumerate}
\item $U_Y$ has a filtration whose subquotients are of the form $\Delta_Z$, with $Z\rhd Y$;
\item every nonzero morphism $P_Y\longrightarrow \Delta_Y$ is epic.
\end{enumerate}
\item A quasihereditary ring $(\Gamma,\Phi,\unlhd )$ is called \emph{left strongly quasihereditary} if $\Proj{\Delta_Y}\leq 1$ for all $Y\in \Phi$.
\end{enumerate} 
\end{defi}
\begin{rem}
For an Artin algebra, the notion dual to standard module is that of a costandard module. In this case, $\Delta_Y$ is the largest factor module of $P_Y$ whose composition factors are of the form $L_Z$ with $Z \unlhd Y$. The costandard module $\nabla_Y$ is the largest submodule of the injective module with socle $L_Y$ with composition factors of the form $L_Z$, where $Z \unlhd Y$. A quasihereditary algebra is left strongly quasihereditary precisely when the category $\mathcal{F}(\nabla)$ of modules in $\mathcal{A}$ filtered by costandard modules is closed under taking quotients. Different characterisations of left strongly quasihereditary algebras may be found in \cite{yey}, \cite[Lemma 4.1]{MR1211481} and \cite[Appendix]{RingelIyama}.
\end{rem}

We shall see that the Artin algebra $\Gamma=\End{\Lambda}{\overline{X}}$ is left strongly quasihereditary with respect to the poset $(\Phi,\unlhd)$ defined previously. For the next proposition we use the notation in Definition \ref{defi:qh}.
\begin{prop}
\label{prop:auxi}
Let $Y \in \Phi$ and consider the $\Lambda$-module $V_Y=\Rej{Y}{\mathcal{A}(<  \mu(Y))}$, as well as the canonical epic $\pi: Y \longrightarrow Y/V_Y$. Set $\pi^*=\Hom{\Lambda}{\pi}{\overline{X}}$. Then, the $\Gamma$-modules $\Ima{\pi^*}$ and $U_Y$ coincide. Moreover, $U_Y$ is a direct sum of projectives of the form $P_Z$ with $Z\rhd Y$ and $\Proj{\Delta_{Y}} \leq 1$.
\end{prop}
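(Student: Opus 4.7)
The plan is to identify both $\Ima \pi^*$ and $U_Y$ with the same projective $\Gamma$-module $P_{Y/V_Y}$. Since $\pi$ is epic, $\pi^*$ is monic, so as a $\Gamma$-submodule of $P_Y$, $\Ima \pi^* \cong \Hom{\Lambda}{Y/V_Y}{\overline{X}}$. The first key step is to show $Y/V_Y \in \Add{\overline{X}}$, which upgrades this isomorphism to a direct sum of indecomposable projectives $P_Z$.

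I would establish $Y/V_Y \in \Add{\overline{X}}$ by tracking $Y$ through the construction $(X_i)$. Let $n$ be the minimal index with $Y$ a summand of $X_n$; then $\mu(Y) \leq \mu(X_n)$. Whenever $\mu(Y) < \mu(X_j)$, the inclusion $Y \in \mathcal{A}(<\mu(X_j))$ gives $\Rej{Y}{\mathcal{A}(<\mu(X_j))} = 0$, so by additivity of the rejection $Y$ persists as a summand of $X_{j+1}$, in particular forcing $\mu(X_{j+1}) \geq \mu(Y)$. The sequence $\mu(X_n) > \mu(X_{n+1}) > \cdots$ strictly decreases within the finite poset of subsets of $\{1,\dots,|X_1|\}$ and is bounded below by $\mu(Y)$ as long as $Y$ persists, hence it must attain $\mu(Y)$ at some step $j$. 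At that step $\Rej{Y}{\mathcal{A}(<\mu(X_j))} = V_Y$, so $Y/V_Y$ appears as a direct summand of $X_{j+1}$ and therefore of $\overline{X}$. Decomposing $Y/V_Y = \bigoplus_k Z_k$ with each $Z_k \in \Phi$ then gives $\Ima \pi^* \cong \bigoplus_k P_{Z_k}$. Each $Z_k$ embeds in $Y/V_Y$, which is cogenerated by $\mathcal{A}(<\mu(Y))$, so Lemma \ref{lem:propgrm}(1) yields $\mu(Z_k) < \mu(Y)$, i.e.\ $Z_k \rhd Y$.

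For $\Ima \pi^* \subseteq U_Y$, each summand $P_{Z_k}$ of $\Ima \pi^*$ is the image of the $\Gamma$-map $P_{Z_k} \to P_Y$ induced by the $\Lambda$-morphism $Y \xrightarrow{\pi} Y/V_Y \twoheadrightarrow Z_k$, and $Z_k \rhd Y$ implies $Z_k \ntrianglelefteq Y$, placing these images inside $U_Y$. For the reverse inclusion $U_Y \subseteq \Ima \pi^*$, it suffices to show that for every $Z \in \Phi$ with $Z \ntrianglelefteq Y$ and every $g : Y \to Z$, the map $g$ vanishes on $V_Y$; any $\alpha g$ with $\alpha : Z \to \overline{X}$ will then factor through $\pi$ and hence lie in $\Ima \pi^*$. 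If $\mu(Z) < \mu(Y)$, this is immediate from the definition of $V_Y$. If $\mu(Z) = \mu(Y)$ and $Z \not\cong Y$, then $|Z| = |Y|$ by Lemma \ref{lem:propgrm}(2); any nonzero $g$ must be neither injective nor surjective, since either would yield an isomorphism between distinct indecomposables of equal length. Hence $\Ima g \subsetneq Z$ and $|\Ima g| < |Y|$, and every indecomposable summand $W$ of $\Ima g$ embeds in $Z$ with $|W| < |Y|$, so $\mu(W) \leq \mu(Y)$ and Lemma \ref{lem:propgrm}(2) rules out equality, forcing $\mu(W) < \mu(Y)$. Thus $\Ima g \in \Add{\mathcal{A}(<\mu(Y))}$ and the factorization $Y \twoheadrightarrow \Ima g \hookrightarrow Z$ kills $V_Y$.

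Putting this together, $U_Y = \Ima \pi^* \cong \bigoplus_k P_{Z_k}$ is a projective $\Gamma$-module with each $Z_k \rhd Y$, and the short exact sequence $0 \to U_Y \to P_Y \to \Delta_Y \to 0$ yields $\Proj{\Delta_Y} \leq 1$. I expect the main subtlety to be in the tracking argument of paragraph two: ensuring that the strictly decreasing sequence of $\mu(X_j)$'s cannot leap over $\mu(Y)$, so that $Y/V_Y$ genuinely appears as a summand of some $X_{j+1}$.
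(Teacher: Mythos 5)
Your proof is correct and follows essentially the same route as the paper's: identify $\Ima{\pi^*}$ with $\Hom{\Lambda}{Y/V_Y}{\overline{X}}$, and show that every morphism $Y\longrightarrow Z$ with $Z\ntrianglelefteq Y$ has image of Gabriel--Roiter measure strictly smaller than $\mu(Y)$, hence factors through $\pi$. The only difference is that you spell out the ``tracking'' argument establishing $Y/V_Y\in\Add{\overline{X}}$, which the paper asserts without detail as a consequence of the construction of $(X_i)_{i\in\N}$; this added care is welcome but does not constitute a different method.
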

\begin{proof}
By construction of the sequence $(X_i)_{i\in \mathbb{N}}$ and the properties of the rejects, it follows that $Y/V_Y$ lies in $\Add{\overline{X}}$. In fact, $Y/V_Y$ must be isomorphic to a direct sum of modules $Z\in \Phi$ satisfying $\mu(Z)<\mu (Y)$. Hence $\Hom{\Lambda}{Y/V_Y}{\overline{X}}$ is the direct sum of projectives $P_Z$ with $Z\rhd Y$. The functor $\Hom{\Lambda}{-}{\overline{X}}$ maps the epic $\pi$ to the monic $\pi^*:\Hom{\Lambda}{Y/V_Y}{\overline{X}} \longrightarrow P_{Y}$. Using the definition of trace, one deduces that $\Ima{\pi^*}$ is contained in $U_Y$. 

Note that the statement of the proposition will now follow once we show that $U_Y$ is contained in $\Ima{\pi^*}$. For this, consider an arbitrary morphism $f^*:P_{Z} \longrightarrow P_{Y}$ with $Z\ntrianglelefteq Y$. There exists a morphism $f \in \Hom{\Lambda}{Y}{Z}$ such that $\Hom{\Lambda}{f}{\overline{X}}=f^*$. 

Since $Z\ntrianglelefteq Y$, then either $Z\rhd Y$ or $Z$ and $Y$ not comparable. This means that either $\mu(Z)<\mu(Y)$ or $\mu(Z)=\mu(Y)$ but $Z\neq Y$. In both cases we show that $\mu(\Ima{f})<\mu(Y)$. The inclusion $\Ima{f} \subseteq Z$ implies that $\mu(\Ima{f})\leq \mu(Z)$ (here we use part (1) of Lemma \ref{lem:propgrm}). If $\mu(Z)<\mu(Y)$, then $\mu(\Ima{f})\leq \mu(Z)<\mu(Y)$. In case $\mu(Z)=\mu(Y)$ and $Z\neq Y$, the modules $Y$ and $Z$ are not isomorphic but they have the same length by part (2) of Lemma \ref{lem:propgrm}. Hence, the map $f$ cannot be an epic in this case. If $\Ima{f}$ and $Z$ had the same Gabriel--Roiter measure, then parts (2) and (3) of Lemma \ref{lem:propgrm} would imply the existence an indecomposable summand of $\Ima{f}$ whose inclusion in $Z$ was an isomorphism. The latter situation only occurs when $f$ is an epic, which shows that $\mu(\Ima{f})<\mu(Z) =\mu(Y)$.

By construction, $Y/V_Y$ is the largest factor of $Y$ whose Gabriel--Roiter measure is smaller than $\mu(Y)$. Since $\Ima{f}$ is a quotient of $Y$ satisfying $\mu(\Ima{f})<\mu(Y)$, then the epic $\COIM{f}:Y \longrightarrow \Ima{f}$ factors through $\pi: Y\longrightarrow Y/V_Y$. The functor $\Hom{\Lambda}{-}{\overline{X}}$ maps
\[
\begin{tikzcd}[ampersand replacement=\&]
Y \arrow[two heads]{rd}{\COIM{f}} \arrow[two heads]{d}{\pi}\arrow{rr}{f} \&\& Z \\
Y/V_Y \arrow[two heads]{r}{\rho} \& \Ima{f} \arrow[hook]{ru}[swap]{\IMA{f}}\&
\end{tikzcd}
\]
to
\[
\begin{tikzcd}[ampersand replacement=\&]
P_{Z}  \arrow{rd}[swap]{(\IMA{f})^*} \arrow{rr}{f^*} \&\& P_{Y} \\
\& \Hom{\Lambda}{\Ima{f}}{\overline{X}} \arrow[hook]{r}{\rho^*} \arrow[hook]{ru}{(\COIM{f})^*} \& \Hom{\Lambda}{Y/V_Y}{\overline{X}} \arrow[hook]{u}{\pi^*}
\end{tikzcd},
\]
where $g^*$ denotes $\Hom{\Lambda}{g}{\overline{X}}$ for a map $g$ in $\mathcal{A}$. From this diagram it is clear that $\Ima{f^*}\subseteq \Ima{\pi^*} $. Since the morphism $f^*$ was chosen arbitrarily, we conclude that $U_Y \subseteq \Ima{\pi^*}$.
\end{proof}

We now use Proposition \ref{prop:auxi} to prove that $\Gamma$ is quasihereditary.
\begin{thm}
\label{thm:iyamat}
The Artin algebra $\Gamma$ is quasihereditary, hence $\Gl{\Gamma}<\infty$. More precisely, $\Gamma$ is left strongly quasihereditary with respect to $(\Phi, \unlhd)$.
\end{thm}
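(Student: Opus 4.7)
The plan is to derive both halves of Definition \ref{defi:qh}(2) directly from Proposition \ref{prop:auxi}, and to read off left strong quasiheredity from its final clause. For condition (2b), that every nonzero morphism $P_Y \longrightarrow \Delta_Y$ is epic, it suffices to show that $U_Y \subseteq \Rad{P_Y}$, so that $\Delta_Y = P_Y/U_Y$ inherits the simple top $L_Y$. This follows immediately from the description of $U_Y$ as a direct sum of projectives $P_Z$ with $Z \rhd Y$ (in particular $Z \neq Y$ in $\Phi$): any morphism between non-isomorphic projective indecomposables factors through the radical of its target.

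Condition (2a), the existence of a $\Delta$-filtration of $U_Y$ with subquotients $\Delta_Z$, $Z \rhd Y$, I would obtain by induction on $\mu(Y)$ along the finite set $\{\mu(Y) \mid Y \in \Phi\}$ (which is finite because $\overline{X}$ lies in $\mathcal{A}$ and hence admits only finitely many isomorphism classes of indecomposable summands). The stronger inductive claim is that every $P_Y$ carries a filtration whose subquotients are of the form $\Delta_W$ with $W \unrhd Y$. The base case is when $\mu(Y)$ is minimal in $\{\mu(Z)\mid Z\in\Phi\}$; here Proposition \ref{prop:auxi} forces $U_Y=0$, and so $P_Y=\Delta_Y$. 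For the inductive step, Proposition \ref{prop:auxi} identifies $U_Y$ with a direct sum $\bigoplus_{j} P_{Z_j}$ with $\mu(Z_j)<\mu(Y)$; by induction each $P_{Z_j}$ admits a filtration with subquotients $\Delta_W$, $W \unrhd Z_j$, and every such $W$ satisfies $\mu(W)\leq \mu(Z_j)<\mu(Y)$, hence $W\rhd Y$. Concatenating these filtrations and adjoining $\Delta_Y=P_Y/U_Y$ on top yields the required filtration of $P_Y$, and restricting to the submodule $U_Y$ gives exactly condition (2a).

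Combining these two steps shows that $\Gamma$ is quasihereditary with respect to $(\Phi,\unlhd)$, whence $\Gl{\Gamma}<\infty$ by the result of \cite{Moosonee} cited in the introduction. Left strong quasiheredity is then the last assertion of Proposition \ref{prop:auxi}, namely $\Proj{\Delta_Y}\leq 1$ for every $Y\in \Phi$. I do not anticipate a serious obstacle in executing this plan: all the representation-theoretic content has been absorbed into Proposition \ref{prop:auxi}, which supplies both the projective summand description of $U_Y$ that drives the induction and the projective-dimension bound required for the strong version; what remains is a routine downward induction along the poset $(\Phi,\unlhd)$.
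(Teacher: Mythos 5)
Your handling of condition (2a) and of left strong quasiheredity is essentially the paper's argument (a descending induction on $(\Phi,\unlhd)$, which is the same as your upward induction on the finitely many values of $\mu$, together with the last clause of Proposition \ref{prop:auxi}). The gap is in condition (2b). You reduce it to showing $U_Y\subseteq \Rad{P_Y}$, i.e.\ that $\Delta_Y=P_Y/U_Y$ has simple top $L_Y$. But that inclusion is automatic for \emph{any} semiperfect ring and \emph{any} partial order: $U_Y$ is by definition a trace of projectives $P_Z$ with $Z\neq Y$, and every morphism between non-isomorphic projective indecomposables has image in the radical of its target --- exactly the observation you make. Since condition (2b) is precisely what separates quasihereditary rings from merely (right) standardly stratified ones, it cannot follow from a statement that always holds. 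What (2b) actually demands is that $L_Y$ occur only once as a composition factor of $\Delta_Y$; equivalently, that every \emph{non-isomorphism} $f^*\colon P_Y\longrightarrow P_Y$ satisfy $\Ima{f^*}\subseteq U_Y$. If $L_Y$ occurred in $\Rad{\Delta_Y}$, there would be a nonzero non-epic map $P_Y\longrightarrow \Delta_Y$ despite the simple top.

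This endomorphism case is not absorbed into Proposition \ref{prop:auxi}, which only treats maps $P_Z\longrightarrow P_Y$ with $Z\ntrianglelefteq Y$, hence $Z\neq Y$. You need the additional argument the paper supplies: a non-isomorphism $f^*$ is of the form $\Hom{\Lambda}{f}{\overline{X}}$ for some $f\in\End{\Lambda}{Y}$ which is not an isomorphism; by parts (2) and (3) of Lemma \ref{lem:propgrm}, if $\mu(\Ima{f})=\mu(Y)$ then some indecomposable summand of $\Ima{f}$ would include into $Y$ as an isomorphism, forcing $f$ to be epic and hence an isomorphism; therefore $\mu(\Ima{f})<\mu(Y)$, so $f$ factors through the approximation $\pi\colon Y\longrightarrow Y/V_Y$ and $\Ima{f^*}\subseteq\Ima{\pi^*}=U_Y$. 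Once this step is inserted, the rest of your plan goes through as written.
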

\begin{proof}
We start by checking condition (2)(b) in Definition \ref{defi:qh}. This is equivalent to showing that every nonisomphism $f^*:P_{Y} \longrightarrow P_{Y}$ satisfies $\Ima{f^*}\subseteq  U_Y$. We follow the strategy used in part of the proof of Proposition \ref{prop:auxi}. Let $f^*:P_{Y} \longrightarrow P_{Y}$ be a nonisomorphism. There exists a morphism $f \in \End{\Lambda}{Y}$ such that $\Hom{\Lambda}{f}{\overline{X}}=f^*$. Using an argument identical to the one in the proof of Proposition \ref{prop:auxi}, we must have $\mu(\Ima{f})<\mu (Y)$, otherwise $f$, and consequently $f^*$, would be isomorphisms. But then, as before, $f$ factors through the epic $\pi:Y\longrightarrow Y/V_Y$, thus $\Ima{f^*} \subseteq \Ima{\pi^*}=U_Y$ (here the last equality follows from Proposition \ref{prop:auxi}).

It remains to check condition (a) in the definition of quasihereditary algebra. We do this by descending induction on the poset $(\Phi,\unlhd)$. If $Y$ is a maximal element in $(\Phi,\unlhd)$, then $U_Y=0$ according to Proposition \ref{prop:auxi}, so the condition is trivially satisfied. Suppose now that $Y$ is not maximal and assume that $U_W$ is filtered by modules $\Delta_{Z}$ with $Z \rhd W$, for every $W$ satisfying $W\rhd Y$. By Proposition \ref{prop:auxi}, $U_Y$ is a direct sum of modules $P_W$ with $W \rhd Y$, so this module must be filtered by $\Delta_{Z}$, with $Z \rhd Y$. This shows that $\Gamma$ is quasihereditary. It follows from Proposition \ref{prop:auxi} that $\Gamma$ is left strongly quasihereditary. By \cite[Theorem $4.3$]{Moosonee}, $\Gl{\Gamma}< \infty$.
\end{proof}

\begin{rem}
In Section \ref{sec:third}, we generalise Theorem \ref{thm:iyamat} and provide upper bounds for the global dimension of $\Gamma$ and for the projective dimension of the simple $\Gamma$-modules.
\end{rem}

\begin{ex}
\label{ex:easy}
Let $\Lambda$ be the bound quiver algebra $k Q/I$ where $I=\langle \alpha^3, \beta\alpha\rangle$ and
\[Q=
\begin{tikzcd}[ampersand replacement=\&]
  \overset{1}{\circ} \arrow[loop left]{}{\alpha} \arrow{r}{\beta}\&
    \overset{2}{\circ}
\end{tikzcd}.
\]
Let $X_1=X$ be the projective indecomposable $P_1$ -- note that $\Gl{\End{\Lambda}{X}}=\infty$. The Gabriel--Roiter measure of $X_1$ is $\mu(X_1)=\{1,2,4\}$ and
\[
X_2=X_1/\Rej{X_1}{\mathcal{A}(<  \mu (X_1))}=\begin{tikzcd}[ampersand replacement=\&, row sep=tiny, column sep=tiny,]
\& 1 \arrow[dash]{dr} \arrow[dash]{dl} \& \\
1  \& \& 2 
\end{tikzcd}.
\]
Now $\mu (X_2)=\{1,3\}$, $X_3=X_2/\Rej{X_2}{\mathcal{A}(<  \mu (X_2))}=L_1$ and $X_4=0$. Hence the endomorphism algebra $\Gamma=\End{\Lambda}{X_1 \oplus X_2 \oplus X_3}$ has finite global dimension.
\end{ex}

Recall that the \textit{representation dimension} of $\Lambda$ is given by
\[
\rep{\Lambda}=\min\{\Gl{\End{\Lambda}{\overline{X}}}\mid \overline{X}\text{ generates and cogenerates }\Mod{\Lambda}\}.
\]
\begin{cor}[{\cite[Corollary $1.2$]{iyama}}]
Every finitely generated module $X$ over an Artin algebra $\Lambda$ has a complement $X'$, such that $\Gl{\End{\Lambda}{X\oplus X'}}< \infty$. In particular, every Artin algebra has finite representation dimension.
\end{cor}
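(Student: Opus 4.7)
The plan is to show that the corollary follows almost immediately from Theorem \ref{thm:iyamat} by a suitable choice of $X'$, and then to deduce the statement about representation dimension by choosing the initial module $X$ to be a generator-cogenerator of $\Mod{\Lambda}$.

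For the first assertion, given an arbitrary $X \in \Mod{\Lambda}$, I would set $X_1 = X$ and recursively define the sequence $(X_i)_{i\in \N}$ exactly as in Section \ref{sec:first}, by $X_{i+1} = X_i/\Rej{X_i}{\mathcal{A}(<\mu(X_i))}$. By Lemma \ref{lem:obvi}, there exists $\ell \in \N_0$ with $X_i = 0$ for $i > \ell$, so the candidate complement
\[
X' = \bigoplus_{i\geq 2} X_i
\]
is a finitely generated $\Lambda$-module. Then $X \oplus X'$ coincides with the module $\overline{X} = \bigoplus_{i \in \N} X_i$ constructed before Theorem \ref{thm:iyamat}, and that theorem states that $\Gamma = \End{\Lambda}{\overline{X}}$ is (left strongly) quasihereditary. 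By \cite[Theorem 4.3]{Moosonee} any quasihereditary semiprimary ring has finite global dimension, so $\Gl{\End{\Lambda}{X \oplus X'}} < \infty$, as required.

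For the second assertion, I would apply the first part to the generator-cogenerator $X_0 = \Lambda \oplus D\Lambda$ of $\Mod{\Lambda}$, producing a complement $X_0'$ with $\Gl{\End{\Lambda}{X_0 \oplus X_0'}} < \infty$. Since $X_0 \oplus X_0'$ contains $X_0$ as a direct summand, it is again a generator-cogenerator of $\Mod{\Lambda}$; hence by the very definition of representation dimension,
\[
\rep{\Lambda} \leq \Gl{\End{\Lambda}{X_0 \oplus X_0'}} < \infty.
\]

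There is no real obstacle at this stage: all the work has been done in Lemma \ref{lem:obvi} (to guarantee that $\overline{X}$ is finitely generated) and in Theorem \ref{thm:iyamat} (to ensure the endomorphism ring is quasihereditary). The only subtlety worth pointing out in the write-up is that the construction of $X'$ depends on $X$ via the sequence $(X_i)_{i \in \N}$, and in particular that $X$ itself occurs as the summand $X_1$ of $\overline{X}$, so that $\overline{X} = X \oplus X'$ really is of the desired form.
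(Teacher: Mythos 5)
Your proposal is correct and follows exactly the paper's own route: the first assertion is read off from Theorem \ref{thm:iyamat} (with $X'=\bigoplus_{i\geq 2}X_i$, so that $X\oplus X'=\overline{X}$), and the second follows by applying it to $X=\Lambda\oplus D(\Lambda)$. You merely spell out details the paper leaves implicit, such as finiteness of $X'$ via Lemma \ref{lem:obvi} and the fact that adding summands preserves the generator-cogenerator property.
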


\begin{proof}
The first statement of the corollary is a consequence of Theorem \ref{thm:iyamat}. The second statement follows from the first by setting $X=\Lambda\oplus D(\Lambda)$.\end{proof}

\section{Rejective chains and quasihereditary algebras}
\label{sec:second}
The theory of rejective chains was developed by Iyama, in connection with his proofs of the finiteness of representation dimension for Artin algebras and of Solomon's second conjecture on local zeta functions for orders (\cite{iyama,MR1953711}). In \cite{2003math.....11281I}, Iyama showed that every left (resp.~right) complete total rejective chain of finite type gives rise to a left (resp.~right) strongly quasihereditary ring. It turns out that the notion of complete total rejective chain is, in a specific sense, equivalent to that of a strongly quasihereditary ring, as observed in \cite[Theorem 3.22]{Mayu}. In particular, the quasihereditary algebra defined in Section \ref{sec:first} is associated to a rejective chain (this will be clarified in Section \ref{sec:third}). In this section we give a concise self-contained account of the basics on rejective chains.

From now onwards, $\mathcal{A}$ will be a fixed additive category. All subcategories are assumed to be full and closed under isomorphisms, finitary direct sums and direct summands.

Let $\mathcal{C}'$ be a subcategory of some category $\mathcal{C}$. Recall that a morphism $f$ in $\Hom{\mathcal{C}}{X}{X'}$ is called a \emph{left $\mathcal{C}'$-approximation} of an object $X$ in $\mathcal{C}$ if $X'$ lies in $\mathcal{C}'$ and every morphism $f'\in \Hom{\mathcal{C}}{X}{X''}$ with $X''$ in $\mathcal{C}'$ factors through $f$. The notion of right $\mathcal{C}'$-approximation of an object in $\mathcal{C}$ is defined dually (see \cite[§1]{ausrei}). In addition, the category $\mathcal{C'}$ is called a \emph{reflective} (resp.~\emph{coreflective}) subcategory of $\mathcal{C}$, if the inclusion functor has a left (resp.~right) adjoint.

\begin{defi}[{\cite[$\S 1.5$]{2003math.....11281I}}]
A subcategory $\mathcal{C}'$ of a category $\mathcal{C}$ is said to be \emph{left} (resp.~\emph{right}) \emph{rejective} if every $X$ in $\mathcal{C}$ has a left (resp.~right) $\mathcal{C}'$-approximation which is an epic (resp.~monic). Equivalently, $\mathcal{C'}$ is a left (resp.~right) rejective subcategory of $\mathcal{C}$, if $\mathcal{C'}$ is a reflective (resp.~coreflective) subcategory of $\mathcal{C}$ and the unit (resp.~counit) of the corresponding adjunction is an epic (resp.~monic) in the category of endofunctors of $\mathcal{C}$.
\end{defi}

Next, we state the definition of (left and right) complete total rejective chain, introduced by Iyama in \cite[$\S 2.2$]{2003math.....11281I}, and the broader notion of complete total prerejective chain.
\begin{defi}
A chain $0=\mathcal{C}_{\ell+1}\subset \mathcal{C}_{\ell}  \subset \cdots \subset \mathcal{C}_{1}$ of proper inclusions of subcategories of $\mathcal{A}$ is called a \emph{left} (resp.~\emph{right}) \emph{complete total prerejective chain} of length $\ell$, if $\mathcal{C}_i$ is a left (resp.~right) rejective subcategory of $\mathcal{C}_1$ and every morphism between nonisomorphic indecomposable objects in $\mathcal{C}_{i}$ factors through an object in $\mathcal{C}_{i+1}$, for every $i$ satisfying $1\leq i \leq \ell$.

The chain is \emph{left} (resp.~\emph{right}) \emph{complete total rejective} if $\mathcal{C}_{i}$ is a left (resp.~right) rejective subcategory of $\mathcal{C}_1$ and every nonisomorphism between indecomposable objects in $\mathcal{C}_i$ factors through an object in $\mathcal{C}_{i+1}$, for every $i$ satisfying $1\leq i \leq \ell $.
\end{defi}
\begin{rem}
Every left (resp.~right) complete total rejective chain is left (resp.~right) complete total prerejective.
\end{rem}

\begin{ex}
Consider the Kronecker algebra, i.e.~the path algebra $\Lambda=k Q$ where
\[Q=
\begin{tikzcd}[ampersand replacement=\&]
  \overset{1}{\circ} \arrow[bend left]{r}  \arrow[bend right]{r}\&
    \overset{2}{\circ}
\end{tikzcd}.
\]
Let $X$ be any regular $\Lambda$-module with $|X|\geq 4$ and let $L_1$ be the simple (injective) module associated to vertex 1 (see, for instance, \cite[Chapter VII.7]{MR1476671} for details about the indecomposables over the Kronecker algebra). Then $0\subset \Add{L_1} \subset \Add{X\oplus L_1}$ is a left complete total prerejective chain in $\Mod{\Lambda}$ which is not left complete total rejective.
\end{ex}

Let $e$ an idempotent endomorphism of an object $X$ in a category $\mathcal{C}$. The morphism $e$ is called a \emph{split idempotent} if there is $Y$ in $\mathcal{C}$, $\pi \in \Hom{\mathcal{A}}{X}{Y}$ and $\iota \in \Hom{\mathcal{C}}{Y}{X}$ satisfying $e=\iota \circ \pi$ and $\pi \circ \iota=1_{Y}$. We say that an additive category $\mathcal{C}$ has \emph{finite type} if the following conditions hold:
\begin{enumerate}
\item $\mathcal{C}$ is \emph{idempotent complete}, i.e. all idempontent endomorphisms in $\mathcal{C}$ are split;
\item $\mathcal{C}$ contains only finitely many indecomposable objects up to isomorphism;
\item the endomorphism ring of any object in $\mathcal{C}$ is semiprimary.
\end{enumerate}
According to \cite[Corollary $4.4$]{MR3431480}, every category of finite type is Krull-Schmidt. Recall that an additive category is \emph{Krull-Schmidt} if every object admits a finite direct sum decomposition into indecomposable objects having local endomorphism rings. By our assumptions on subcategories, an indecomposable object in a subcategory $\mathcal{C'}$ of a category $\mathcal{C}$ is also an indecomposable object in $\mathcal{C}$; similarly, finitary direct sums of objects in $\mathcal{C}'$ are also (finitary) direct sums in $\mathcal{C}$. Additionally, note that if $\mathcal{C'}$ is a subcategory of $\mathcal{C}$ and $\mathcal{C}$ has finite type, then so does $\mathcal{C'}$.

Suppose $0=\mathcal{C}_{\ell+1}\subset \mathcal{C}_{\ell}  \subset \cdots \subset \mathcal{C}_{1}$ is a left or right complete total prerejective chain of subcategories of $\mathcal{A}$ and assume that $\mathcal{C}_{1}$ has finite type. Then $\mathcal{C}_1$ is the additive closure of some object $\overline{X}$ which is the direct sum of a finite number of indecomposable objects and $\Gamma=\End{\mathcal{A}}{\overline{X}}$ is a semiprimary ring. For every indecomposable object $Y$ in $\mathcal{C}_1=\Add{\overline{X}}$ there exists a unique $i$ satisfying $1\leq i \leq\ell$, such that $Y$ lies in $\mathcal{C}_i$ but not in $\mathcal{C}_{i+1}$ -- we denote such $i$ by $\ell_Y$ and call it the \emph{level} of $Y$. The level of an indecomposable object only depends on its isomorphism class, since, by assumption, our subcategories are closed under isomorphisms. Let $\Phi$ be a set containing one representative from each isomorphism class of indecomposable objects in $\mathcal{C}_{1}$. We define a partial order $\unlhd$ in $\Phi$ by setting $Y\lhd Z$ if $\ell_Y < \ell_{Z}$. The projective indecomposable $\Gamma$-modules are given by $P_Y=\Hom{\mathcal{A}}{Y}{\overline{X}}$ with $Y\in \Phi$, so the poset $(\Phi, \unlhd)$ labels the simple $\Gamma$-modules. Analogously, the $\Gamma\op$-modules $P_Y\op=\Hom{\mathcal{A}}{\overline{X}}{Y}$ with $Y \in \Phi$ form a complete set of projective indecomposable $\Gamma\op$-modules, up to isomorphism.

For the next result, recall the notion of trace in Section \ref{sec:first} and the notation introduced in Definition \ref{defi:qh}. 

\begin{lem}
\label{lem:prerej}
Suppose $0=\mathcal{C}_{\ell +1}\subset \mathcal{C}_{\ell}  \subset \cdots \subset \mathcal{C}_{1}$ is a left complete total prerejective chain of subcategories of $\mathcal{A}$. Assume that $\mathcal{C}_1$ has finite type, let $\overline{X}$ be an additive generator of $\mathcal{C}_1$ and define $\Gamma=\End{\mathcal{A}}{\overline{X}}$. Take $Y\in \Phi$, let $\pi:Y \longrightarrow Y'$ be any left $\mathcal{C}_{\ell_{Y}+1}$-approximation of $Y$ and set $\pi^*=\Hom{\mathcal{A}}{\pi}{\overline{X}}$. Then the $\Gamma$-modules $\Ima{\pi^*}$ and $U_Y$ coincide. In fact, $U_Y$ a direct sum of projectives $P_Z$ with $Z\rhd Y$.
\end{lem}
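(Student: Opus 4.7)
The plan is to adapt the proof of Proposition \ref{prop:auxi} almost verbatim, replacing the Gabriel--Roiter measure by the level function $\ell$ and the quotient $Y/V_Y$ by the target $Y'$ of the approximation. I would first establish the two inclusions $\Ima{\pi^*}\subseteq U_Y$ and $U_Y\subseteq \Ima{\pi^*}$ for any left approximation $\pi$, and then use the existence of an \emph{epic} left $\mathcal{C}_{\ell_Y+1}$-approximation (guaranteed by the left rejectivity of $\mathcal{C}_{\ell_Y+1}$ in $\mathcal{C}_1$) to identify $U_Y$ explicitly with a direct sum of projectives of the required form.

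For $\Ima{\pi^*}\subseteq U_Y$, I would observe that $Y'$ lies in $\mathcal{C}_{\ell_Y+1}$, which has finite type and is therefore Krull--Schmidt. Decomposing $Y'$ into indecomposables $Z_i$, each satisfies $\ell_{Z_i}\geq \ell_Y+1$, so $Z_i \rhd Y$ in $(\Phi,\unlhd)$. Applying $\Hom{\mathcal{A}}{-}{\overline{X}}$ to this decomposition yields $\Hom{\mathcal{A}}{Y'}{\overline{X}} \cong \bigoplus_i P_{Z_i}$, and each component of $\pi^*$ is a map $P_{Z_i} \to P_Y$ whose image lies in $U_Y$ by definition of the trace. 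Summing the components gives $\Ima{\pi^*}\subseteq U_Y$.

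For the reverse inclusion it suffices to show that every morphism $f^*\colon P_Z \to P_Y$ with $Z\ntrianglelefteq Y$ factors through $\pi^*$. By Yoneda, such an $f^*$ is induced by a unique map $f\colon Y\to Z$. The hypothesis $Z\ntrianglelefteq Y$ splits into two cases. If $\ell_Z > \ell_Y$, then $Z$ already lies in $\mathcal{C}_{\ell_Y+1}$, so $f$ factors through $\pi$ directly by the left approximation property. If instead $\ell_Z=\ell_Y$ with $Y\not\cong Z$, then $Y$ and $Z$ are nonisomorphic indecomposables in $\mathcal{C}_{\ell_Y}$, and the defining prerejective property yields a factorisation $f = f_2 \circ f_1$ through some $W\in \mathcal{C}_{\ell_Y+1}$; the map $f_1\colon Y\to W$ then factors through $\pi$ by approximation. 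In either case $f = \phi\circ \pi$ for some $\phi\colon Y'\to Z$, hence $f^* = \pi^*\circ \phi^*$ and $\Ima{f^*}\subseteq \Ima{\pi^*}$.

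Combining the two inclusions gives $\Ima{\pi^*} = U_Y$ for every left approximation $\pi$. Taking $\pi$ epic makes $\pi^*$ injective, so $U_Y \cong \Hom{\mathcal{A}}{Y'}{\overline{X}}$ is visibly a direct sum of projectives $P_Z$ with $Z\rhd Y$. I expect the main obstacle to be the incomparable case $\ell_Z=\ell_Y$, $Y\not\cong Z$: the left approximation property of $\pi$ alone does not allow one to factor $f\colon Y\to Z$ through $\pi$, and one must genuinely invoke the prerejective condition to route $f$ through $\mathcal{C}_{\ell_Y+1}$. This plays precisely the role played in Proposition \ref{prop:auxi} by the observation that nonisomorphic indecomposables of equal Gabriel--Roiter measure share the same length, forcing any nonisomorphism between them to have image of strictly smaller measure.
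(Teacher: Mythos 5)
Your proposal is correct and follows essentially the same route as the paper: decompose $Y'$ into indecomposables of strictly larger level to get $\Ima{\pi^*}\subseteq U_Y$, and for the reverse inclusion split $Z\ntrianglelefteq Y$ into the case $\ell_Z>\ell_Y$ (handled by the approximation property) and the incomparable case $\ell_Z=\ell_Y$, $Z\not\cong Y$ (handled by the prerejective condition), exactly as in the paper's proof. The only cosmetic difference is that you prove both inclusions for an arbitrary left approximation and then specialise to an epic one, whereas the paper starts from an epic approximation and remarks that all left approximations have the same image under $\Hom{\mathcal{A}}{-}{\overline{X}}$; the content is identical.
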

\begin{proof}
Let $Y$ be in $\Phi$. Then $Y$ lies in $\mathcal{C}_1$. Since $\mathcal{C}_{\ell_Y+1}$ is a left rejective subcategory of $\mathcal{C}_1$, there exists a left $\mathcal{C}_{\ell_Y+1}$-approximation of $Y$ which is an epic. Let $\pi:Y \longrightarrow Y'$ be such an approximation ($\pi$ is actually a left minimal $\mathcal{C}_{\ell_Y+1}$-approximation). Any other left $\mathcal{C}_{\ell_Y+1}$-approximation $\nu$ of $Y$ satisfies $\Ima{\pi^*}=\Ima{\nu^*}$, where $g^*$ denotes $\Hom{\mathcal{A}}{g}{\overline{X}}$ for a morphism $g$ in $\mathcal{A}$. The functor $\Hom{\mathcal{A}}{-}{\overline{X}}$ maps the epic $\pi$ to the monic
\[\pi^*: \Hom{\mathcal{A}}{Y'}{\overline{X}}\longrightarrow P_Y.\]
Since $Y'$ lies in $\mathcal{C}_{\ell_Y+1}$, then $Y'$ is isomorphic to a finite direct sum of indecomposable objects $Z \in \Phi$, with $\ell_Z >\ell_Y$. The $\Gamma$-module $\Hom{\mathcal{A}}{Y'}{\overline{X}}\cong \Ima{\pi^*}$ is then isomorphic to a finite direct sum of projective $\Gamma$-modules $P_Z$, with $Z \rhd Y$. In particular the monic $\pi^*$ factors through the embedding of $\Gamma$-module $U_Y$ in $P_Y$. Hence $\Ima{\pi^*}\subseteq U_Y$ and it remains to show these two $\Gamma$-modules actually coincide. 

For this, consider an arbitrary morphism in $\Hom{\Gamma}{P_Z}{P_Y}$, where $Z\ntrianglelefteq Y$. Any such map is of the form $f^*$, i.e.~it is the image of a morphism $f:Y\longrightarrow Z$ through the functor $\Hom{\mathcal{A}}{-}{\overline{X}}$. Since $Z\ntrianglelefteq Y$, then either $Z\rhd Y$, or otherwise $Z$ and $Y$ are not comparable in the poset $(\Phi,\unlhd)$. In both cases, we prove that $f$ factors through $\pi$. If $Z\rhd Y$, then $\ell_Z \geq \ell_Y +1$, so $Z$ belongs to $\mathcal{C}_{\ell_Y+1}$ and $f$ must factor through the left $\mathcal{C}_{\ell_Y+1}$-approximation $\pi$. If $Z$ and $Y$ are not comparable, then $\ell_Y=\ell_Z$, but $Z \neq Y$ (i.e.~$Z\not\cong Y$). The definition of complete total left prerejective chain implies then that $f:Y\longrightarrow Z$ factors through an object in $\mathcal{C}_{\ell_Y+1}$, therefore $f$ must also factor through the left $\mathcal{C}_{\ell_Y+1}$-approximation $\pi$. So in both cases, we have $f=h\circ \pi$ for some morphism $h$ in $\mathcal{A}$. Consequently, $f^*$ factors as
\[
\begin{tikzcd}[ampersand replacement=\&]
P_{Z}  \arrow{rd}{h^*} \arrow{rr}{f^*} \&\& P_{Y} \\
\& \Hom{\mathcal{A}}{Y'}{\overline{X}} \arrow[hook]{ru}{\pi^*} \& 
\end{tikzcd}.
\]
It follows that $\Ima{f^*}\subseteq \Ima{\pi^*}$. Since the morphism $f^*:P_Z \longrightarrow P_Y$ was chosen arbitrarily among those satisfying $Z\ntrianglelefteq Y$, we deduce that $U_Y= \Ima{\pi^*}$.
\end{proof}

Not surprisingly, Lemma \ref{lem:prerej} has a corresponding dual version. We state it for completeness.

\begin{lem}
Suppose $0=\mathcal{C}_{\ell +1}\subset \mathcal{C}_{\ell}  \subset \cdots \subset \mathcal{C}_{1}$ is a right complete total prerejective chain of subcategories of $\mathcal{A}$. Assume that $\mathcal{C}_1$ has finite type, let $\overline{X}$ be an additive generator of $\mathcal{C}_1$ and define $\Gamma=\End{\mathcal{A}}{\overline{X}}$. Take $Y\in \Phi$, let $\iota:Y' \longrightarrow Y$ be any right $\mathcal{C}_{\ell_{Y}+1}$-approximation of $Y$ and set $\iota_*=\Hom{\mathcal{A}}{\overline{X}}{\iota}$. Then the $\Gamma\op$-modules $\Ima{\iota_*}$ and $U_Y$ coincide. In fact, $U_Y$ a direct sum of projectives $P_Z\op$ with $Z\rhd Y$.
\end{lem}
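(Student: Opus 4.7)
The plan is to dualise the proof of Lemma \ref{lem:prerej} step by step, swapping left approximations for right ones, epics for monics, and the contravariant functor $\Hom{\mathcal{A}}{-}{\overline{X}}$ for the covariant left-exact functor $\Hom{\mathcal{A}}{\overline{X}}{-}$; throughout, the convention is that $U_Y$ denotes the analogous trace submodule inside the $\Gamma\op$-projective $P_Y\op=\Hom{\mathcal{A}}{\overline{X}}{Y}$, i.e.\ $U_Y=\Tr{\{P_Z\op\mid Z\ntrianglelefteq Y\}}{P_Y\op}$.

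First I would fix $Y\in\Phi$ and appeal to right-rejectivity of $\mathcal{C}_{\ell_Y+1}$ in $\mathcal{C}_1$ to obtain a right $\mathcal{C}_{\ell_Y+1}$-approximation $\iota:Y'\longrightarrow Y$ which is a monic. As in the left case, the image $\Ima{\iota_*}$ inside $P_Y\op$ is independent of the chosen approximation, so it suffices to work with this one. Since $\Hom{\mathcal{A}}{\overline{X}}{-}$ is left exact, it sends the monic $\iota$ to a monic $\iota_*:\Hom{\mathcal{A}}{\overline{X}}{Y'}\hookrightarrow P_Y\op$ of $\Gamma\op$-modules. Because $Y'$ lies in $\mathcal{C}_{\ell_Y+1}$, its Krull--Schmidt decomposition is a finite direct sum of indecomposables $Z\in\Phi$ with $\ell_Z>\ell_Y$, whence $\Hom{\mathcal{A}}{\overline{X}}{Y'}\cong\Ima{\iota_*}$ is a direct sum of projective $\Gamma\op$-modules $P_Z\op$ with $Z\rhd Y$. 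In particular $\Ima{\iota_*}\subseteq U_Y$ by the definition of the trace.

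For the reverse inclusion I would pick an arbitrary morphism $f^*\in\Hom{\Gamma\op}{P_Z\op}{P_Y\op}$ with $Z\ntrianglelefteq Y$; by Yoneda this is of the form $f^*=\Hom{\mathcal{A}}{\overline{X}}{f}$ for some $f\in\Hom{\mathcal{A}}{Z}{Y}$. There are two cases. If $Z\rhd Y$ then $\ell_Z\geq \ell_Y+1$, so $Z$ already lies in $\mathcal{C}_{\ell_Y+1}$ and the right-approximation property of $\iota$ yields a factorisation $f=\iota\circ h$ for some $h:Z\longrightarrow Y'$. If instead $Z$ and $Y$ are incomparable then $\ell_Z=\ell_Y$ and $Z\neq Y$, so the right complete total prerejective axiom guarantees that $f$ factors through an object of $\mathcal{C}_{\ell_Y+1}$ and hence, by the approximation property, through $\iota$ again. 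Applying $\Hom{\mathcal{A}}{\overline{X}}{-}$ to $f=\iota\circ h$ gives $f^*=\iota_*\circ h^*$, so $\Ima{f^*}\subseteq\Ima{\iota_*}$; as $f^*$ was arbitrary, $U_Y\subseteq\Ima{\iota_*}$ and the two $\Gamma\op$-modules coincide.

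The only ``obstacle'' here is really bookkeeping: making sure that $\Hom{\mathcal{A}}{\overline{X}}{-}$ is treated covariantly in the second slot and produces $\Gamma\op$-modules, that ``epic'' is consistently replaced by ``monic'' (and that left-exactness of $\Hom{\mathcal{A}}{\overline{X}}{-}$ is what is needed, rather than its right exactness which would fail), and that the trace $U_Y$ is interpreted on the $\Gamma\op$ side. No genuinely new argument is required; in fact the statement could alternatively be deduced by applying Lemma \ref{lem:prerej} to the opposite category $\mathcal{A}\op$, in which right $\mathcal{C}_{\ell_Y+1}$-approximations in $\mathcal{A}$ become left approximations in $\mathcal{A}\op$ and $\End{\mathcal{A}\op}{\overline{X}}=\Gamma\op$, but spelling out the dual argument directly is the cleanest presentation.
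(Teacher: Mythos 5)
Your proof is correct and matches the paper's intent exactly: the paper states this dual lemma without proof, leaving it to be obtained by dualising Lemma \ref{lem:prerej} (equivalently, by passing to the opposite category), which is precisely what you carry out, with the right bookkeeping on monics versus epics, the covariant Hom functor, and the interpretation of $U_Y$ on the $\Gamma\op$ side. No discrepancies to report.
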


As we shall see next, complete total prerejective chains give rise to standardly stratified rings. Note that $\Gamma$ is called \emph{right standardly stratified} with respect to a labelling poset $(\Phi,\unlhd)$ if it satisfies condition (a) in the definition of quasihereditary ring (see Definition \ref{defi:qh}), i.e.~if $U_Y$ is filtered by standard modules $\Delta_Z$ with $Z\rhd Y$ for every $Y \in \Phi$.

\begin{prop}
\label{prop:reject}
Suppose $0=\mathcal{C}_{\ell +1}\subset \mathcal{C}_{\ell}  \subset \cdots \subset \mathcal{C}_{1}$ is a left (resp.~right) complete total prerejective chain of subcategories of $\mathcal{A}$ and assume that $\mathcal{C}_1$ has finite type. Let $\overline{X}$ be an additive generator of $\mathcal{C}_1$ and define $\Gamma=\End{\mathcal{A}}{\overline{X}}$. Then $\Gamma$ (resp.~$\Gamma\op$) is a right standardly stratified ring with respect to $(\Phi, \unlhd)$. Furthermore, every standard $\Gamma$-module (resp.~$\Gamma\op$-module) has projective dimension at most $1$.
\end{prop}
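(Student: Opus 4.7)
The plan is to establish the left case in detail; the right case is formally dual, using the dual of Lemma \ref{lem:prerej} in place of Lemma \ref{lem:prerej} itself. Given that lemma, both assertions of the proposition reduce to a single descending induction on the finite poset $(\Phi, \unlhd)$ in which, for each $Y \in \Phi$, one simultaneously verifies that $U_Y$ admits a filtration whose subquotients are standard modules $\Delta_Z$ with $Z \rhd Y$, and that $\Proj{\Delta_Y} \leq 1$.

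For the base of the induction, let $Y$ be a maximal element of $(\Phi, \unlhd)$. Then $\ell_Y = \ell$, so $\mathcal{C}_{\ell_Y + 1} = 0$ and the zero morphism is the only left $\mathcal{C}_{\ell_Y + 1}$-approximation of $Y$; Lemma \ref{lem:prerej} yields $U_Y = 0$. The filtration condition holds vacuously and $\Delta_Y = P_Y$ is itself projective, so $\Proj{\Delta_Y} = 0$.

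For the inductive step, take $Y$ nonmaximal and assume the statement for every $W \in \Phi$ with $W \rhd Y$. By Lemma \ref{lem:prerej}, $U_Y$ is isomorphic to a finite direct sum $\bigoplus_{k} P_{Z_k}$ of indecomposable projectives $P_{Z_k}$ with $Z_k \rhd Y$. Combining each short exact sequence $0 \to U_{Z_k} \to P_{Z_k} \to \Delta_{Z_k} \to 0$ with the inductive hypothesis applied to $Z_k$ produces a filtration of $P_{Z_k}$ whose subquotients are standard modules $\Delta_{Z'}$ with $Z' \unrhd Z_k$, hence with $Z' \rhd Y$. Splicing these filtrations across the indecomposable summands of $U_Y$ (one block at a time) gives the required filtration of $U_Y$. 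Since $U_Y$ is itself projective, the canonical short exact sequence $0 \to U_Y \to P_Y \to \Delta_Y \to 0$ is a projective resolution of $\Delta_Y$ of length at most one, so $\Proj{\Delta_Y} \leq 1$. The right version follows by replacing left approximations with right approximations and $P_Y$ with $P_Y\op$ throughout, invoking the dual of Lemma \ref{lem:prerej}.

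The argument closely mirrors the one used in the proof of Theorem \ref{thm:iyamat} to verify condition (a) of Definition \ref{defi:qh}, and essentially all of the genuine work has been absorbed into Lemma \ref{lem:prerej}. The only subtle point — and the main potential obstacle — is recognising that Lemma \ref{lem:prerej} supplies $U_Y$ not merely as a submodule admitting a projective cover by modules $P_Z$ with $Z \rhd Y$, but as an honest direct sum of such projectives. This strengthening is precisely what simultaneously delivers the standard filtration of $U_Y$ and the projective resolution bounding $\Proj{\Delta_Y}$ by one.
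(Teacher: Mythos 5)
Your proposal is correct and follows essentially the same route as the paper: both deduce $\Proj{\Delta_Y}\leq 1$ immediately from the fact that Lemma \ref{lem:prerej} exhibits $U_Y$ as a finite direct sum of projectives $P_Z$ with $Z\rhd Y$, and both establish the standard filtration of $U_Y$ by descending induction on $(\Phi,\unlhd)$, splicing the filtrations of the summands $P_{Z}$ obtained from $0\to U_{Z}\to P_{Z}\to \Delta_{Z}\to 0$ and the inductive hypothesis. The only (immaterial) difference is organisational: the paper treats the projective dimension bound as an immediate consequence of the lemma rather than folding it into the induction.
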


\begin{proof}
We prove the statement of the proposition for left complete total prerejective chains. From Lemma \ref{lem:prerej}, it is clear that every standard $\Gamma$-module has projective dimension at most one. We show, by descending induction on $(\Phi,\unlhd)$, that $U_Y$ is filtered by modules $\Delta_{Z}$ with $Z\rhd Y$, for every $Y\in\Phi$. If $Y$ is a maximal element in $(\Phi,\unlhd)$ then $U_Y=0$ by Lemma \ref{lem:prerej}, so the condition is trivially satisfied. Suppose now that $Y$ is not maximal and assume that $U_W$ is filtered by modules $\Delta_{Z}$ with $Z \rhd W$, for every $W\in \Phi$ satisfying $W\rhd Y$. By Lemma \ref{lem:prerej}, $ U_Y$ is a direct sum of projectives $P_W$, with $W\rhd Y$, so $U_Y$ must be filtered by $\Delta_{Z}$, with $Z \rhd Y$.
\end{proof}

Theorem \ref{thm:reject} is a slight refinement of results proved in \cite[$\S 2.2.2$, $\S 3.5.1$]{2003math.....11281I}.
\begin{thm}
\label{thm:reject}
Suppose $0=\mathcal{C}_{\ell +1}\subset \mathcal{C}_{\ell}  \subset \cdots \subset \mathcal{C}_{1}$ is a left (resp.~right) complete total rejective chain of subcategories of $\mathcal{A}$ and assume that $\mathcal{C}_1$ has finite type. Let $\overline{X}$ be an additive generator of $\mathcal{C}_1$ and define $\Gamma=\End{\mathcal{A}}{\overline{X}}$. Then $\Gamma$ (resp.~$\Gamma\op$) is a left strongly quasihereditary ring with respect to $(\Phi, \unlhd)$ and the projective dimension of the simple module labelled by $Y$ is at most $\ell_Y$ for every $Y\in \Phi$. In particular, $\Gl{\Gamma}\leq \ell$.
\end{thm}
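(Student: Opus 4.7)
The plan is to bootstrap from Proposition \ref{prop:reject}: since every left complete total rejective chain is a left complete total prerejective chain, that proposition already supplies the right standardly stratified structure on $\Gamma$ with respect to $(\Phi,\unlhd)$, together with the bound $\Proj{\Delta_Y}\leq 1$ for every $Y\in\Phi$. To upgrade ``right standardly stratified'' to ``quasihereditary'' only condition (2)(b) of Definition \ref{defi:qh} remains to be checked, and once it is, ``left strongly quasihereditary'' is automatic.

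I would verify (2)(b) by the same reduction used at the start of the proof of Theorem \ref{thm:iyamat}: it suffices to show that every non-isomorphism $f^*:P_Y\longrightarrow P_Y$ satisfies $\Ima{f^*}\subseteq U_Y$. Such an $f^*$ equals $\Hom{\mathcal{A}}{f}{\overline{X}}$ for a unique $f\in\End{\mathcal{A}}{Y}$, and $f$ must also be a non-isomorphism since $\Hom{\mathcal{A}}{-}{\overline{X}}$ is fully faithful on $\Add{\overline{X}}$. Here the additional strength of ``rejective'' over ``prerejective'' is essential: by definition, every non-isomorphism between indecomposable objects in $\mathcal{C}_{\ell_Y}$ factors through an object of $\mathcal{C}_{\ell_Y+1}$, hence through the left $\mathcal{C}_{\ell_Y+1}$-approximation $\pi:Y\longrightarrow Y'$ appearing in Lemma \ref{lem:prerej}. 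Consequently $f^*$ factors through $\pi^*$, and $\Ima{f^*}\subseteq\Ima{\pi^*}=U_Y$ as required.

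For the projective-dimension estimates I would use ascending induction on $\ell_Y$ applied to the short exact sequence
\[ 0 \longrightarrow \Rad{\Delta_Y} \longrightarrow \Delta_Y \longrightarrow L_Y \longrightarrow 0. \]
Since $\Gamma$ is now known to be quasihereditary, the composition factors of $\Rad{\Delta_Y}$ are simples $L_Z$ with $Z\lhd Y$, that is, with $\ell_Z<\ell_Y$. For the base case $\ell_Y=1$ no such $Z$ exists, whence $\Delta_Y=L_Y$ and $\Proj{L_Y}\leq 1=\ell_Y$. For the inductive step the hypothesis yields $\Proj{L_Z}\leq \ell_Z\leq \ell_Y-1$ for each such composition factor, so by dévissage $\Proj{\Rad{\Delta_Y}}\leq \ell_Y-1$; combining this with $\Proj{\Delta_Y}\leq 1$ gives
\[ \Proj{L_Y}\leq \max\bigl(\Proj{\Delta_Y},\,\Proj{\Rad{\Delta_Y}}+1\bigr)\leq \ell_Y. \]
The bound $\Gl{\Gamma}\leq \ell$ then follows by taking the supremum over $\Phi$, and the right-rejective case is handled dually. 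The only nontrivial step is the rejective-to-quasihereditary upgrade in the second paragraph; the projective-dimension computation is a routine inductive dévissage afterwards.
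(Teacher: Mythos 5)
Your proof is correct and follows essentially the same route as the paper: the verification of condition (2)(b) via the rejectivity property and Lemma \ref{lem:prerej} is identical, and the bound on $\Proj{L_Y}$ is obtained by the same induction on $\ell_Y$. The only real difference is in the d\'evissage step: you apply the horseshoe estimate to $0 \to \Rad{\Delta_Y} \to \Delta_Y \to L_Y \to 0$ and invoke $\Proj{\Delta_Y}\leq 1$ from Proposition \ref{prop:reject}, whereas the paper works with $0 \to U_Y \to \Rad{P_{Y}} \to \Rad{\Delta_{Y}} \to 0$ and uses the projectivity of $U_Y$ from Lemma \ref{lem:prerej}; both yield $\Proj{L_Y}\leq \ell_Y$. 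One small caution: since $\Gamma$ is only assumed semiprimary, $\Rad{\Delta_Y}$ need not have finite length, so your phrase ``d\'evissage over the composition factors'' should be replaced by the argument the paper actually makes --- $\Rad{\Delta_Y}$ has finite Loewy length, each radical layer is a (possibly infinite) direct sum of simples $L_Z$ with $Z\lhd Y$, and the projective dimension of a direct sum is the supremum of the projective dimensions of its summands --- after which your estimate $\Proj{\Rad{\Delta_Y}}\leq \ell_Y-1$ and the rest of the computation go through unchanged.
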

\begin{proof}
We prove the statement for left complete total rejective chains -- the dual case is analogous. Consider $Y \in \Phi$. We start by checking condition (2)(b) in Definition \ref{defi:qh}. This amounts to proving that every nonisomorphism $f^*:P_Y \longrightarrow P_Y$ satisfies $\Ima{f^*}\subseteq U_Y$. As before, there is $f \in \End{\mathcal{A}}{Y}$ such that $f^*=\Hom{\mathcal{A}}{f}{\overline{X}}$. The morphism $f$ cannot be an isomorphism. Hence, by the definition of left complete total rejective chain, $f$ factors through a module in $\mathcal{C}_{\ell_Y+1}$. More concretely, $f$ must factor through a left $\mathcal{C}_{\ell_Y+1}$-approximation $\pi:Y\longrightarrow Y'$ of $Y$. Therefore $\Ima{f^*}\subseteq \Ima{\pi^*}=U_Y$, where the equality follows from Lemma \ref{lem:prerej}. This conclusion (together with Proposition \ref{prop:reject}) implies that $\Gamma$ is left strongly quasihereditary.

It remains to show that $\Proj{L_Y}\leq \ell_Y$. The upper bound for the global dimension of $\Gamma$ will then follow from the fact that $\ell=\max\{\ell_Y\mid Y \in \Phi\}$ and from \cite[Corollary 11]{MR0074406}. So consider $Y \in \Phi$. If $\ell_Y=1$, then $Y$ is a minimal element in the poset $(\Phi,\unlhd)$, hence $\Delta_Y\cong L_Y$ as $\Gamma$ is quasihereditary. Proposition \ref{prop:reject} then implies that $\Proj{L_Y}\leq 1$. Suppose now that $\ell_Y>1$ and assume that $\Proj{L_W}\leq \ell_W$ for every $W \in \Phi$ satisfying $\ell_W <\ell_Y$ (that is, for every $W$ such that $W \lhd Y$). Since $\Rad{\Delta_Y}$ has finite Loewy length and every radical layer of $\Rad{\Delta_Y}$ is a (possibly infinite) direct sum of simples $L_{W}$ with $W \lhd Y$, then $\Proj{\Rad{\Delta_Y}}\leq \ell_Y -1$ by the Horseshoe Lemma. Consider now the following short exact sequence
\[
\begin{tikzcd}[ampersand replacement=\&]
0 \arrow{r} \& U_Y\arrow{r} \& \Rad{P_{Y}} \arrow{r} \& \Rad{\Delta_{Y}} \arrow{r} \& 0
\end{tikzcd}.
\]
By Lemma \ref{lem:prerej}, $U_Y$ is projective. As $\Proj{\Rad{\Delta_{Y}}}\leq \ell_{Y} -1$, the Horseshoe Lemma implies that $\Proj{\Rad{P_{Y}}}\leq \ell_{Y} -1$. Hence $\Proj{L_{Y}} \leq \ell_{Y}$.
\end{proof}

\begin{ex}
\label{ex:adr}
The \emph{ADR algebra} of (an Artin algebra) $\Lambda$ is given by the endomorphism algebra $\End{\Lambda}{\bigoplus_{i=1}^{\LL{\Lambda}}\Lambda/\RAD{i}{\Lambda}}$. Here $\LL{X}$ denotes the Loewy length of a module $X$ in $\Mod{\Lambda}$. The ADR algebra must arise from a rejective chain, as it is a strongly quasihereditary algebra (\cite{MR3510398,Mayu}). We clarify the latter assertion. Consider the subcategories $\mathcal{C}_i=\Add{(\bigoplus_{j=1}^{\LL{\Lambda}-i+1}\Lambda/\RAD{j}{\Lambda})}$ of $\mathcal{A}=\Mod{\Lambda}$ -- these turn out to define a left complete total rejective chain. Using Theorem \ref{thm:reject}, we recover the following result due to Auslander (\cite{MR0349747}): the global dimension of the ADR algebra of $\Lambda$ is bounded above by $\LL{\Lambda}$. More generally, the broader class of ADR algebras considered in \cite{xi} stems from left complete total rejective chains, as explained in \cite[Section 2]{Mayu}.
\end{ex}
\begin{ex}
By dualising the construction in Example \ref{ex:adr}, one deduces that $\mathcal{C}_i=\Add{(\bigoplus_{j=i}^{\LL{D(\Lambda)}}\SOC{j}{D(\Lambda)})}$ defines a right complete total rejective chain of $\mathcal{A}=\Mod{\Lambda}$. 
\end{ex}
\begin{ex}
\label{ex:iyama}
As previously mentioned, the quasihereditary algebras constructed by Iyama for his proof of Theorem \ref{thm:iyama} stem from rejective chains. We outline Iyama's construction and refer to \cite[§2]{iyama}, \cite{2003math.....11281I} or \cite{RingelIyama} for further details. Let $X=X_1$ be in $\Mod{\Lambda}$ and define recursively
\[
X_{i+1}=\sum_{f \in \Rad(X_i,X_i)} \Ima{f},
\]
where $\Rad(X_i,X_i)$ denotes the set of radical morphisms from $X_i$ to itself. The module $X_{i+1}$ is a proper submodule of $X_i$, so this sequence of modules stabilises in $0$. By setting $\mathcal{C}_i=\Add{\bigoplus_{j\geq i} X_j}$, we obtain a right complete total rejective chain.
\end{ex}
\begin{ex}
Let $X=X_1$ be in $\Mod{\Lambda}$ and define recursively
\[
X_{i+1}=X_i/ \bigcap_{f \in \Rad(X_i,X_i)} \Ker{f}.
\]
This construction is obtained by dualising the one in Example \ref{ex:iyama}. In this case, the categories $\mathcal{C}_i=\Add{\bigoplus_{j\geq i} X_j}$ define a left complete total rejective chain.
\end{ex}
\begin{ex}
The quasihereditary algebras associated to reduced words in the Coxeter group of preprojective algebras studied in \cite{IyamaReiten} and \cite{KacMoodyAdv} also stem from rejective chains. The algebras in \cite{IyamaReiten} are associated to left complete total rejective chains. The setup in \cite{KacMoodyAdv} is dual to to the one in \cite{IyamaReiten} and the corresponding algebras come from right complete total rejective chains. 
\end{ex}
\section{Gabriel--Roiter measure and rejective chains}
\label{sec:third}
In Section \ref{sec:first}, we have used the Gabriel--Roiter measure to show that every $X$ in $\mathcal{A}=\Mod{\Lambda}$ has a complement $X'$ such that $\End{\mathcal{A}}{X\oplus X'}$ is a left strongly quasihereditary algebra. Now, we extend our proof to the case of $\mathcal{A}$ being an abelian length category and use a broader notion of Gabriel--Roiter measure. In theory, this will allow us to produce multiple complements $X'$ of a fixed object $X$ so that the resulting ring $\End{\mathcal{A}}{X\oplus X'}$ is left strongly quasihereditary. It is not new that Iyama's result (Theorem \ref{thm:iyama}) holds in abelian length categories. It is also not unexpected that one obtains a general procedure for producing multiple complements $X'$. As a matter of fact, the construction of $X'$ in Section \ref{sec:first} only depends on the properties of the Gabriel--Roiter measure stated in Lemma \ref{lem:propgrm}, therefore it seems reasonable that the method should work for other functions $\mu$ with ``similar features''. Instead of adapting the arguments in Section \ref{sec:first}, we will employ the technology of rejective chains to generalise and refine Theorem \ref{thm:iyamat}.

For this, we fix an abelian length category $\mathcal{A}$. Note that an abelian category is a \emph{length category} if every object satisfies the ascending chain condition and the descending chain condition on subobjects and if the isomorphism classes of indecomposable objects form a set (\cite{MR0340377}). As a consequence of the conditions imposed, $\mathcal{A}$ is a Krull-Schmidt category and every object in $\mathcal{A}$ has a composition series (\cite{MR0340377}, \cite[Section 2]{MR2346934}). According to \cite[Corollary $4.4$]{MR3431480}, the endomorphism ring of any object in $\mathcal{A}$ has semisimple artinian top. Furthermore, the Harada--Sai Lemma for abelian categories implies that the radical of $\End{\mathcal{A}}{X}$ is nilpotent for every $X$ in $\mathcal{A}$ (see \cite{MR0286859} and also \cite[Lemma 8]{MR3373366}). Consequently, the endomorphism ring of any object in $\mathcal{A}$ is semiprimary (consult also \cite[Corollary $23.9$]{MR1245487}). 

\begin{defi}[{\cite[Section 3]{MR2346934}}]
A \emph{length function} on $\mathcal{A}$ is a map $\lambda:\mathcal{A}\longrightarrow [0,+\infty[$ satisfying:
\begin{enumerate}
\item $\lambda (X)=0$ if and only if $X=0$;
\item $\lambda (Y_2)=\lambda (Y_1) + \lambda (Y_3)$ for every short exact sequence
\[\begin{tikzcd}[ampersand replacement=\&]
0 \arrow{r} \& Y_1 \arrow{r} \& Y_2 \arrow{r} \& Y_3 \arrow{r} \& 0.
\end{tikzcd}\]
\end{enumerate}
\end{defi}
\begin{rem}
A length function is totally determined by its values on simple objects. By setting all the simples to have value one, one gets the usual composition length.
\end{rem}
Let $\lambda$ be a length function on $\mathcal{A}$. Following \cite[Section 3]{MR2346934}, we explain how to define a ``weighted Gabriel--Roiter measure'' $\mu_{\lambda}$ of $\mathcal{A}$ associated to $\lambda$. 

Let $\mathcal{P}_f([0,+\infty[)$ be the set whose elements are finite subsets of ${[0,+\infty[}$. This becomes a total order when endowed with the lexicographic order $\leq$, which is defined as $I<J$ if $\min ((I\setminus J)\cup (J\setminus I)) \in J$. Now, observe that $\ind{\mathcal{A}}$ becomes a partially ordered set by setting $Z\subseteq Y$ if $Z$ is a subobject of $Y$. Given $Y$ in $\ind{\mathcal{A}}$, denote by $\mu_{\lambda}(Y)$ the maximum (taken over $(\mathcal{P}_f([0,+\infty[),\leq)$) of the set whose elements are the sets $\{\lambda(Y_1),\ldots \lambda(Y_t)\}$ for which $Y_1 \subset \cdots \subset Y_t$ is a chain of subobjects of $Y$ in $(\ind{\mathcal{A}},\subseteq)$. One may extend the domain of $\mu_{\lambda}$ to $\mathcal{A}$, by defining
\[
\mu_{\lambda}(\bigoplus_{i=1}^t Y_i)=\max \{{\mu}_\lambda(Y_i)\mid i=1,\ldots, t\}.
\]
The function $\mu_{\lambda}$ is called the \emph{Gabriel--Roiter measure of $\mathcal{A}$ associated to $\lambda$}. By specifying $\lambda$ to be the composition length and $\mathcal{A}$ to be $\Mod{\Lambda}$, one obtains the classic definition of Gabriel--Roiter measure for finitely generated modules. The function $\mu_{\lambda}$ satisfies properties analogous to those of the classical Gabriel--Roiter measure.

\begin{thm}[{\cite[Theorem $3.1$, Proposition $3.2$]{MR2346934}}]
\label{thm:easy}
Let $X$, $Y$ be objects in $\mathcal{A}$. Let $\mu_{\lambda}$ be the Gabriel--Roiter measure of $\mathcal{A}$ associated to a length function $\lambda$. Then:
\begin{enumerate}
\item if $X$ is a subobject of $Y$, then $\mu_{\lambda}(X)\leq \mu_{\lambda}(Y)$;
\item if $X$ and $Y$ lie in $\ind{\mathcal{A}}$ and $\mu_{\lambda}(X)=\mu_{\lambda}(Y)$, then $\lambda (X)=\lambda (Y)$.
\end{enumerate}
\end{thm}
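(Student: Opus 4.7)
The plan is to handle (2) first, then (1), both via a common structural observation together with a reduction to the indecomposable case.

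For (2), my first step would be to establish that, for any indecomposable $X$, the value $\lambda(X)$ is the maximum element of the finite set $\mu_{\lambda}(X)\subseteq [0,+\infty[$. To see this, take a chain $X_{1}\subset\cdots\subset X_{t}$ of indecomposable subobjects of $X$ whose lambda-set realizes $\mu_{\lambda}(X)$. If $X_{t}\neq X$, the enlarged chain $X_{1}\subset\cdots\subset X_{t}\subset X$ produces a strictly larger set in the lex order, because the symmetric difference of the two sets reduces to $\{\lambda(X)\}$ and $\lambda(X)>\lambda(X_{t})$ by additivity of $\lambda$ on the exact sequence $0\to X_{t}\to X\to X/X_{t}\to 0$ together with $\lambda(X/X_{t})>0$. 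This contradicts maximality, forcing $X_{t}=X$, so $\max\mu_{\lambda}(X)=\lambda(X)$. Statement (2) would then follow at once: if $\mu_{\lambda}(X)=\mu_{\lambda}(Y)$ for indecomposable $X,Y$, their maxima agree, giving $\lambda(X)=\lambda(Y)$.

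For (1), I would first reduce to $X$ indecomposable via the identity $\mu_{\lambda}(\bigoplus_{i}X_{i})=\max_{i}\mu_{\lambda}(X_{i})$, noting that each indecomposable summand $X_{i}$ is also a subobject of $Y$ through $X_{i}\hookrightarrow X\hookrightarrow Y$. Writing $Y=\bigoplus_{j}Y_{j}$ into indecomposables, the task becomes to find some $j$ with $\mu_{\lambda}(X)\leq\mu_{\lambda}(Y_{j})$. My strategy is to invoke the following embedding lemma: in the Krull--Schmidt abelian length category $\mathcal{A}$, every indecomposable subobject $X$ of $Y=\bigoplus_{j}Y_{j}$ is isomorphic to a subobject of some single summand $Y_{j}$. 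Granted this lemma, any chain $X_{1}\subset\cdots\subset X_{t}=X$ realizing $\mu_{\lambda}(X)$ transports, via the isomorphism, to a chain of indecomposable subobjects of $Y_{j}$, yielding $\mu_{\lambda}(X)\leq\mu_{\lambda}(Y_{j})\leq\mu_{\lambda}(Y)$.

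The main obstacle is the embedding lemma. To prove it, I would analyze the composites $\iota_{j}\colon X\hookrightarrow Y\twoheadrightarrow Y_{j}$ and show at least one must be a monomorphism. When $\operatorname{Soc}(X)$ is simple, the argument is quick: at least one $\iota_{j}$ is nonzero on $\operatorname{Soc}(X)$ (else $\iota$ would vanish on $\operatorname{Soc}(X)$), hence monic on it, and any nonzero kernel of such $\iota_{j}$ would necessarily contain the unique simple subobject of $X$, a contradiction. When $\operatorname{Soc}(X)$ consists of several simples distributed across different summands of $Y$, more care is required; my plan would be to combine the locality of $\operatorname{End}_{\mathcal{A}}(X)$ (supplied by the Krull--Schmidt property of $\mathcal{A}$) with an induction along the socle filtration of $X$, to piece together partial embeddings coming from the various simple components into a single monomorphism $X\to Y_{j}$ for some $j$.
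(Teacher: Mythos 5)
This statement is quoted by the paper from the cited reference \cite{MR2346934} and is not proved in the text, so there is no in-paper argument to compare against; I will judge your proposal on its own. Your treatment of part (2) is correct: for indecomposable $X$, a chain realizing $\mu_{\lambda}(X)$ must terminate at $X$ itself, since otherwise appending $X$ adds a new element strictly larger than all existing ones (as $\lambda$ is strictly monotone on proper inclusions) and hence strictly increases the set in the given order; thus $\lambda(X)=\max \mu_{\lambda}(X)$ and (2) follows. The reduction of (1) to $X$ indecomposable is also fine.

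However, part (1) rests on a false lemma. An indecomposable subobject of $\bigoplus_j Y_j$ need \emph{not} embed into any single summand $Y_j$. For a concrete counterexample, let $\Lambda$ be the path algebra of the quiver $2 \leftarrow 1 \rightarrow 3$ and let $P_1$ be the indecomposable projective at vertex $1$, with dimension vector $(1,1,1)$ and socle $S_2\oplus S_3$. The two canonical epimorphisms $P_1\rightarrow P_1/S_3$ and $P_1\rightarrow P_1/S_2$ have kernels intersecting in $0$, so together they give a monomorphism $P_1\hookrightarrow (P_1/S_3)\oplus(P_1/S_2)$; yet $P_1$ has length $3$ while each summand has length $2$, so $P_1$ embeds in neither. (Statement (1) itself survives here: $\mu(P_1)=\{1,3\}<\{1,2\}=\mu(P_1/S_3)$, consistent with the theorem.) Your quick argument in the simple-socle case is sound, but the general case you defer to ``more care'' cannot be repaired by an induction on the socle filtration or by locality of $\operatorname{End}_{\mathcal{A}}(X)$, because the statement you are trying to establish is simply false precisely when the socle is not simple. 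The published proofs of (1) (the ``main property'' of the Gabriel--Roiter measure in Ringel's and Krause's papers) avoid any such embedding: they argue by induction on $\lambda(X)$, working directly with a chain $X_1\subset\cdots\subset X_t=X$ of indecomposable subobjects of $\bigoplus_j Y_j$, passing to a Gabriel--Roiter subobject of $X$ and carrying out a case analysis in the total order to show $\{\lambda(X_1),\ldots,\lambda(X_t)\}\leq \mu_{\lambda}(Y_j)$ for some $j$. You would need to replace your embedding lemma by an argument of this kind.
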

Before moving on, we need to extend the notion of reject. For this, suppose that $\mathcal{C}$ is some additive category, let $\Theta$ be a set of objects in $\mathcal{C}$ and let $X$ be an object in $\mathcal{C}$. If the kernels $\KER{f}$ of all morphisms $f\in\Hom{\mathcal{C}}{X}{T}$ with $T\in \Theta$ exist, then the \emph{reject of $X$ in $\Theta$}, $\Rej{X}{\Theta}$, is the pullback of the monics
\[(\KER{f}:\Ker{f}\longrightarrow X)_{f\in\Hom{\mathcal{C}}{X}{T}, T \in \Theta},\]
in case this limit exists. So technically, a reject of $X$ consists of an object $\Rej{X}{\Theta}$ together with a monic $\iota=\iota_X: \Rej{X}{\Theta}\longrightarrow X$. In case the cokernel of $\iota$ exists, we denote it by $\pi:X \longrightarrow X/ \Rej{X}{\Theta}$. Rejects are formed using limits, so they are unique up to unique isomorphism. 

Noting that the intersection of submodules of a certain module is the pullback of the corresponding inclusion maps, one concludes that the notion of reject in $\Mod{\Lambda}$ mentioned in Section \ref{sec:first} is an instance of the categorial definition just presented. Since every object $X$ in our underlying abelian category $\mathcal{A}$ is artinian, then the pullback of the monomorphisms $(\KER{f})_{f\in\Hom{\mathcal{C}}{X}{T}, T \in \Theta}$ exists because it can be reduced to a pullback of finitely many of these morphisms. This means that the reject $\iota_X: \Rej{X}{\Theta}\longrightarrow X$ always exists in $\mathcal{A}$ and it is the kernel of some morphism from $X$ to a (finitary) direct sum objects in $\Theta$ (so $X/\Rej{X}{\Theta}$ is cogenerated by $\Theta$). From the universal property of kernels and pullbacks, it follows that any morphism $f:X \longrightarrow Y$ in $\mathcal{A}$ gives rise to a morphism $\Rej{f}{\Theta}:\Rej{X}{\Theta} \longrightarrow \Rej{Y}{\Theta}$ and this correspondence is functorial. Indeed, there is a monic natural tranformation $\iota: \Rej{-}{\Theta} \longrightarrow 1_{\mathcal{A}}$ and this implies that $\Rej{-}{\Theta}$ is an additive functor. Consider now a map $f:X \longrightarrow \bigoplus_{i=1}^t T_i$, with $T_i$ in $\Theta$. A standard argument shows that the reject $\iota_X$ factors through the kernel of $f$, hence $f$ factors through $\pi:X \longrightarrow X/\Rej{X}{\Theta}$. Putting together the observations made in this paragraph, we deduce that $X/\Rej{X}{\Theta}$ is the largest factor of $X$ cogenerated by $\Theta$.

We now fix a length function $\lambda:\mathcal{A}\longrightarrow [0,+\infty [$ on $\mathcal{A}$. Given $I \in \mathcal{P}_f([0,+\infty[)$, let $\mathcal{A}(< I)$ be the set of all $Y$ in $\ind{\mathcal{A}}$ such that $\mu_{\lambda}(Y) < I$. Given $X$ in $\mathcal{A}$, we construct recursively a sequence of objects $(X_i)_{i\in \N}$. As in Section \ref{sec:first}, we set $X_1=X$ and define
\begin{equation}
\label{eq:seq}
X_{i+1}=X_i/\Rej{X_i}{\mathcal{A}(<  \mu_{\lambda} (X_i))}.
\end{equation}
\begin{lem}
\label{lem:terminate}
If $X_i\neq 0$, then $\mu_{\lambda} (X_{i+1})<\mu_{\lambda} (X_i)$ and $X_{i+1}$ is a proper factor object of $X_i$. In particular, there exists $\ell \in \N_0$ such that $X_i=0$ for all $i > \ell$.
\end{lem}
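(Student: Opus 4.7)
The plan is to mimic the short argument of Lemma \ref{lem:obvi} verbatim, except that the properties of the classical Gabriel--Roiter measure get replaced by the properties of $\mu_\lambda$ recorded in Theorem \ref{thm:easy} and the standard definition of reject in $\Mod{\Lambda}$ gets replaced by the categorical notion of reject developed in the paragraph immediately preceding the lemma.

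First I would unpack the construction: by the discussion before the lemma, $X_{i+1}=X_i/\Rej{X_i}{\mathcal{A}(<\mu_\lambda(X_i))}$ is the largest factor object of $X_i$ cogenerated by $\mathcal{A}(<\mu_\lambda(X_i))$, and the reject $\iota_{X_i}$ is the kernel of some morphism from $X_i$ into a finitary direct sum $\bigoplus_{j=1}^t T_j$ of objects $T_j\in \mathcal{A}(<\mu_\lambda(X_i))$. This gives a monomorphism $X_{i+1}\hookrightarrow \bigoplus_{j=1}^t T_j$. Applying part (1) of Theorem \ref{thm:easy} together with the definition of $\mu_\lambda$ on decomposable objects via the maximum over indecomposable summands, one obtains
\[
\mu_\lambda(X_{i+1})\leq \mu_\lambda\!\Bigl(\bigoplus_{j=1}^t T_j\Bigr)=\max_{1\leq j\leq t}\mu_\lambda(T_j)<\mu_\lambda(X_i),
\]
the strict inequality being forced by $T_j\in \mathcal{A}(<\mu_\lambda(X_i))$ for every $j$.

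Next I would deduce that $X_{i+1}$ is a proper factor of $X_i$. Indeed, the canonical epimorphism $\pi\colon X_i\twoheadrightarrow X_{i+1}$ cannot be an isomorphism, because that would force $\mu_\lambda(X_{i+1})=\mu_\lambda(X_i)$, contradicting the strict inequality just established. In particular, the composition length of $X_{i+1}$ (which is well defined since $\mathcal{A}$ is a length category) is strictly smaller than the composition length of $X_i$ whenever $X_i\neq 0$.

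Finally, termination follows by iterating this length drop: starting from $|X_1|=|X|<\infty$, the sequence $(|X_i|)$ is strictly decreasing as long as $X_i\neq 0$, so it must reach $0$ at some finite stage $\ell+1$, and $X_i=0$ for every $i>\ell$. I do not expect any essential obstacle here; the only point requiring care is that the reject is realised as a kernel of a morphism into a \emph{finite} direct sum of test objects, which in turn rests on the artinian hypothesis built into the definition of a length category and was already verified in the paragraph preceding the lemma.
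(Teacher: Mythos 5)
Your proposal is correct and follows essentially the same route as the paper: identify $X_{i+1}$ as the largest quotient of $X_i$ cogenerated by $\mathcal{A}(<\mu_\lambda(X_i))$, embed it into a finite direct sum of test objects, and combine part (1) of Theorem \ref{thm:easy} with the direct-sum formula for $\mu_\lambda$ to get the strict drop in measure, whence properness and termination. If anything, you spell out the key inequality in more detail than the paper does (which compresses it into a citation), and your termination argument via strictly decreasing composition length is an equally valid substitute for the paper's appeal to $X$ being noetherian.
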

\begin{proof}
Suppose that $X_i \neq 0$ for some $i \in \N$, so $\mu_{\lambda} (X_i)\neq \emptyset$. By the properties rejects, $X_{i+1}$ is the largest factor object of $X_i$ cogenerated by objects in $\mathcal{A}(< \mu_{\lambda} (X_i))$. Part (3) of Theorem \ref{thm:easy} implies that $\mu_{\lambda} (X_{i+1})< \mu_{\lambda} (X_i)$, so $X_{i+1}$ is a proper factor of $X_i$. The second statement of the lemma follows from the fact that $X$ is noetherian.
\end{proof}
Let $\ell$ be the smallest integer satisfying $X_{\ell+1}=0$. Define $\mathcal{C}_i=\Add{\bigoplus_{j\geq i} X_j}$. We claim that 
\[0=\mathcal{C}_{\ell +1}\subseteq \mathcal{C}_{\ell}  \subseteq \cdots \subseteq \mathcal{C}_{1}\]
is a left complete total rejective chain of subcategories of $\mathcal{A}$.
\begin{rem}
\label{rem:expla}
Let $Y$ be in $\mathcal{A}$. Note that $Y'=Y/ \Rej{Y}{\mathcal{A}(<  I)}$ is the largest factor object of $Y$ cogenerated by indecomposables having Gabriel--Roiter measure smaller than $I$. Part (1) of Theorem \ref{thm:easy} implies that $Y'$ is the largest factor of $Y$ with Gabriel--Roiter measure smaller than $I$. If $\mu_{\lambda}(Y)<I$, then $Y'$ is isomorphic to $Y$. However, if $\mu_{\lambda}(Y)\geq I$, then $Y'$ is a proper factor object of $Y$. By the way $\mathcal{C}_1$ was constructed and by the fact that rejects preserve finitary direct sums, we conclude that $\mu_{\lambda} (X_1), \mu_{\lambda} (X_2),\ldots ,\mu_{\lambda} (X_\ell)$ are all the possible Gabriel--Roiter measures for a nonzero object in $\mathcal{C}_1$.
\end{rem}
\begin{prop}
\label{prop:grchain}
The chain
\[0=\mathcal{C}_{\ell +1}\subset \mathcal{C}_{\ell}  \subset \cdots \subset \mathcal{C}_{1}\]
defined above is a left complete total rejective chain of subcategories of $\mathcal{A}$.
\end{prop}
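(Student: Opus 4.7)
The plan is to verify in turn the three ingredients of a left complete total rejective chain: that each inclusion $\mathcal{C}_{i+1}\subset\mathcal{C}_i$ is strict, that $\mathcal{C}_i$ is a left rejective subcategory of $\mathcal{C}_1$, and that every nonisomorphism between indecomposables of $\mathcal{C}_i$ factors through $\mathcal{C}_{i+1}$. Strictness of the inclusions is immediate from Lemma \ref{lem:terminate}: the values $\mu_\lambda(X_1)>\mu_\lambda(X_2)>\cdots$ are strictly decreasing, so every indecomposable summand of an object of $\mathcal{C}_{i+1}$ (being, by Krull-Schmidt, a summand of some $X_j$ with $j\geq i+1$) has Gabriel--Roiter measure at most $\mu_\lambda(X_{i+1})<\mu_\lambda(X_i)$, while $X_i$ has an indecomposable summand realising the value $\mu_\lambda(X_i)$.

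For the left rejective condition, by additivity it suffices to produce an epic left $\mathcal{C}_i$-approximation for each indecomposable $Y\in\mathcal{C}_1$. If $Y\in\mathcal{C}_i$ the identity works; otherwise let $m<i$ be the largest index with $Y$ a summand of $X_m$, and define recursively $Y^{(m)}=Y$ and $Y^{(j+1)}=Y^{(j)}/\Rej{Y^{(j)}}{\mathcal{A}(<\mu_\lambda(X_j))}$ for $j\geq m$. Additivity of $\Rej{-}{\Theta}$, applied inductively to the decomposition $X_j=Y^{(j)}\oplus V$, shows that $Y^{(j+1)}$ remains a summand of $X_{j+1}$, so $Y^{(i)}\in\mathcal{C}_i$ and the composition $Y\twoheadrightarrow Y^{(i)}$ is epic. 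To see it is a left $\mathcal{C}_i$-approximation, take any $g\colon Y\to Z$ with $Z\in\mathcal{C}_i$; by Krull-Schmidt each indecomposable summand of $Z$ is a summand of some $X_j$, $j\geq i$, hence of Gabriel--Roiter measure at most $\mu_\lambda(X_i)<\mu_\lambda(X_m)$, and therefore $Z$ is cogenerated by $\mathcal{A}(<\mu_\lambda(X_m))$; this lets $g$ factor through $Y\to Y^{(m+1)}$, and iterating the same observation at each subsequent step pushes the factorisation all the way to $Y^{(i)}$.

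The main obstacle is the factorisation property that distinguishes rejective from merely prerejective chains: any nonisomorphism $f\colon Y\to Z$ between indecomposables of $\mathcal{C}_i$ factors through an object of $\mathcal{C}_{i+1}$. The subcases in which $Y$ or $Z$ already lies in $\mathcal{C}_{i+1}$ are immediate, so I would reduce to $Y,Z\in\mathcal{C}_i\setminus\mathcal{C}_{i+1}$; in particular both are summands of $X_i$. A preliminary step is to show $\mu_\lambda(Y)=\mu_\lambda(Z)=\mu_\lambda(X_i)$: if instead $\mu_\lambda(Y)<\mu_\lambda(X_i)$ then $Y\in\mathcal{A}(<\mu_\lambda(X_i))$ forces $\Rej{Y}{\mathcal{A}(<\mu_\lambda(X_i))}=0$, and additivity of reject exhibits $Y$ as a summand of $X_{i+1}$, a contradiction. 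Next set $W=\Ima{f}\subseteq Z$; it suffices to prove $\mu_\lambda(W)<\mu_\lambda(X_i)$, for then $W$ lies in $\Add{\mathcal{A}(<\mu_\lambda(X_i))}$ and the canonical epic $Y\to Y/\Rej{Y}{\mathcal{A}(<\mu_\lambda(X_i))}$, whose target is a summand of $X_{i+1}\in\mathcal{C}_{i+1}$, factors $f$. Suppose for contradiction that $\mu_\lambda(W)=\mu_\lambda(X_i)=\mu_\lambda(Z)$ and choose an indecomposable summand $W_k$ of $W$ with $\mu_\lambda(W_k)=\mu_\lambda(W)$. Theorem \ref{thm:easy}(2) gives $\lambda(W_k)=\lambda(Z)$, and additivity of $\lambda$ applied to the inclusion $W_k\subseteq Z$ forces $W_k=Z$; hence $W=Z$ and $f$ is epic. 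Applying Theorem \ref{thm:easy}(2) once more to $\mu_\lambda(Y)=\mu_\lambda(Z)$ gives $\lambda(Y)=\lambda(Z)$, whereupon the short exact sequence $0\to\Ker{f}\to Y\to Z\to 0$ forces $\Ker{f}=0$, so $f$ is an isomorphism, contradicting our hypothesis. This completes the three checks and yields the desired left complete total rejective chain.
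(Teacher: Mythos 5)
Your proof is correct and follows essentially the same route as the paper: strictness from the strictly decreasing measures, reject quotients as epic left $\mathcal{C}_i$-approximations, and the image-measure argument via parts (1) and (2) of Theorem \ref{thm:easy} to factor nonisomorphisms through $\mathcal{C}_{i+1}$. The only difference is presentational: you build the approximation by iterating the reject construction level by level from $X_m$ up to $X_i$, whereas the paper takes the single quotient $Y/\Rej{Y}{\mathcal{A}(<  \mu_{\lambda}(X_{i-1}))}$ and appeals to Remark \ref{rem:expla} to see that it lands in $\mathcal{C}_i$; the two constructions yield the same object.
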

\begin{proof}
Note that $\max\{\mu_{\lambda}  (X) \mid X \in \mathcal{C}_i\}=\mu_{\lambda} (X_i)$. Using that $\mu_{\lambda}  (X_{i+1})<\mu_{\lambda} (X_i)$, we deduce that $\mathcal{C}_{i+1}$ is a proper subcategory of $\mathcal{C}_i$. 

We show first that $\mathcal{C}_i$ is a left rejective subcategory of $\mathcal{C}_1$. For this, let $Y$ be an object in $\mathcal{C}_1$ and consider $Y'=Y/\Rej{Y}{\mathcal{A}(<  \mu_{\lambda}  (X_{i-1}))}$. Because rejects preserve finitary direct sums, it is not difficult to infer that $Y'$ must lie in $\mathcal{C}_i$ (see also Remark \ref{rem:expla}). Every object in $\mathcal{C}_i$ has Gabriel--Roiter measure smaller than $\mu_{\lambda} (X_{i-1})$. Since $Y'$ is the largest factor object of $Y$ whose Gabriel--Roiter measure is smaller than $\mu_{\lambda} (X_{i-1})$, then every morphism from $Y$ to an object in $\mathcal{C}_i$ factors through the canonical epic $\pi: Y \longrightarrow Y'$. In other words, $\pi$ is a left $\mathcal{C}_i$-approximation of $Y$. 

It remains to show that every nonisomorphism between indecomposable objects in $\mathcal{C}_i$ factors through an object in $\mathcal{C}_{i+1}$. For this, consider a nonisomorphism $f:Y\longrightarrow Z$ between indecomposable objects in $\mathcal{C}_i$. We show that $f$ factors through an object in $\mathcal{C}_{i+1}$. Let us assume without loss of generality that both $Y$ and $Z$ lie in $\mathcal{C}_{i}$ but not in $\mathcal{C}_{i+1}$. It follows that $\mu_{\lambda} (Y)=\mu_{\lambda} (Z)=\mu_{\lambda}  (X_i)$, so $\lambda(Y)=\lambda(Z)$ according to part (2) of Theorem \ref{thm:easy}. Part (1) of Theorem \ref{thm:easy} implies that $\mu_{\lambda} (\Ima{f})\leq \mu_{\lambda} (Z)$. We prove that $\mu_{\lambda}(\Ima{f}) < \mu_{\lambda}(Z)$. 

Suppose, by contradiction, that $\mu_{\lambda}(\Ima{f})=\mu_{\lambda}(Z)$. There exists an indecomposable summand $W$ of $\Ima{f}$ satisfying $\mu_{\lambda}(W)=\mu_{\lambda}(Z)$. Part (2) of Theorem \ref{thm:easy} yields $\lambda(W)=\lambda(Z)$. By definition of length function, the embedding of $W$ in $\Ima{f}$ postcomposed with the monic $\IMA{f}:\Ima{f}\longrightarrow Z$ must then be an isomorphism. Consequently, $f:Y \longrightarrow Z$ is an an epic satisfying $\lambda(Y)=\lambda(Z)$, hence $f$ is an isomorphism -- a contradiction. 

Therefore $\mu_{\lambda}(\Ima{f}) < \mu_{\lambda}(Z)=\mu_{\lambda}(X_i)$ and $f$ must factor through $Y'$ which is an object in $\mathcal{C}_{i+1}$, as explained in the first part of the proof.
\end{proof}

Our main result is a corollary of Proposition \ref{prop:grchain} and it refines Theorem \ref{thm:iyamat}.
\begin{cor}
\label{cor:finrepdim}
Let $X$ be an arbitrary object in an abelian length category $\mathcal{A}$ and let $\lambda$ be a length function on $\mathcal{A}$. Consider the sequence $(X_i)_{i\in \N}$ of factor objects of $X=X_1$ defined in \eqref{eq:seq} using the Gabriel--Roiter $\mu_{\lambda}$ of $\mathcal{A}$ associated to $\lambda$. Let $\ell\in \mathbb{N}_0$ be the number of nonzero objects of $(X_i)_{i\in \N}$ and set $\overline{X}=\bigoplus_{i=1}^{\ell}X_i$. Then, the ring $\Gamma=\End{\mathcal{A}}{\overline{X}}$ has finite global dimension. More precisely, $\Gamma$ is a left strongly quasihereditary ring and its global dimension is at most $\ell$. Moreover, if $Y$ is an object in $\Add{\overline{X}}$, then it satisfies $\mu_{\lambda}(Y)=\mu_{\lambda}(X_i)$ for exactly one $i$ ($1\leq i\leq \ell$) and the projective dimension of the top of $\Hom{\mathcal{A}}{Y}{\overline{X}}$ is at most $i$.
\end{cor}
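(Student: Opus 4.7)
The plan is to bootstrap from the earlier sections: Proposition \ref{prop:grchain} already delivers the desired left complete total rejective chain built from the sequence $(X_i)$, and Theorem \ref{thm:reject} then automatically produces a left strongly quasihereditary ring with controlled global dimension. Almost nothing new has to be proved; the work consists mainly in verifying hypotheses and matching up notation.

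First I would check that $\mathcal{C}_1 = \Add{\overline{X}}$ has finite type, so that Theorem \ref{thm:reject} applies. By Lemma \ref{lem:terminate}, $\overline{X} = \bigoplus_{i=1}^\ell X_i$ is a finite direct sum in $\mathcal{A}$, hence has only finitely many indecomposable direct summands up to isomorphism. The general remarks at the start of Section \ref{sec:third} show that $\mathcal{A}$ is Krull--Schmidt and that endomorphism rings of objects of $\mathcal{A}$ are semiprimary; both properties are inherited by $\mathcal{C}_1$, and Krull--Schmidt forces idempotent completeness. Proposition \ref{prop:grchain} now tells us that $0 = \mathcal{C}_{\ell+1} \subset \cdots \subset \mathcal{C}_1$ is a left complete total rejective chain in $\mathcal{A}$ of length $\ell$, and Theorem \ref{thm:reject} immediately gives that $\Gamma$ is left strongly quasihereditary with $\Gl{\Gamma} \leq \ell$.

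For the final clause, the main task is to match the level $\ell_W$ of an indecomposable summand $W$ of $\overline{X}$ against its Gabriel--Roiter measure. Remark \ref{rem:expla} already tells us that $\mu_{\lambda}(W) = \mu_{\lambda}(X_k)$ for a unique $k$, and the claim to establish is that this $k$ coincides with $\ell_W$, i.e.~$W$ is a summand of $X_k$ but of no $X_j$ with $j>k$. One direction is immediate: every indecomposable summand of $X_{k+1}$ has $\mu_{\lambda} \leq \mu_{\lambda}(X_{k+1}) < \mu_{\lambda}(X_k)$, which rules out $W$. For the other direction, I would induct on $j$: if $W$ is a summand of $X_j$ for some $j < k$, then $\mu_{\lambda}(W) < \mu_{\lambda}(X_j)$, so $W$ itself lies in $\mathcal{A}(<\mu_{\lambda}(X_j))$ and in particular is cogenerated by it; the split projection $X_j \twoheadrightarrow W$ therefore factors through the canonical epic $X_j \twoheadrightarrow X_{j+1}$, and splitting on the right exhibits $W$ as a summand of $X_{j+1}$. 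Iterating from the $X_j$ that first contains $W$ up to $j = k$ yields $W$ as a summand of $X_k$, so $\ell_W = k$. Using Remark \ref{rem:expla} once more, any $Y$ in $\Add{\overline{X}}$ satisfies $\mu_{\lambda}(Y)=\mu_{\lambda}(X_i)$ for a unique $i$, and the projective dimension bound on the top of $\Hom{\mathcal{A}}{Y}{\overline{X}}$ follows by combining $\ell_W = k$ with the estimate $\Proj{L_W} \leq \ell_W$ furnished by Theorem \ref{thm:reject}.

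The only substantive step is this final matching of levels with Gabriel--Roiter values; the split-projection/reject factorisation argument is short but is the one place where the specific construction of the $X_i$ (rather than just the abstract rejective-chain framework) enters. Everything else reduces to a direct invocation of the results of Section \ref{sec:second}.
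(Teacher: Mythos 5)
Your proposal is correct and follows essentially the same route as the paper: the paper's proof likewise just notes that $\mathcal{C}_1$ has finite type and then cites Proposition \ref{prop:grchain}, Theorem \ref{thm:reject} and Remark \ref{rem:expla}. The only difference is that you spell out the identification of the level $\ell_W$ with the index $k$ for which $\mu_{\lambda}(W)=\mu_{\lambda}(X_k)$ (via the split-projection factoring through the reject quotient), a point the paper delegates to Remark \ref{rem:expla}; your argument for it is the intended one.
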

\begin{proof}
Note that the category $\mathcal{C}_1$ in the statement of Proposition \ref{prop:grchain} has finite type. The corollary then follows from Proposition \ref{prop:grchain}, Theorem \ref{thm:reject} and Remark \ref{rem:expla}.
\end{proof}
Dlab and Ringel's result in \cite{MR943793} and Iyama's theorem in \cite{iyama} indicate that quasihereditary algebras are quite prevalent in representation theory, and Corollary \ref{cor:finrepdim} provides additional evidence for the ubiquity of quasihereditary rings.

The advantage of the procedure described in Corollary \ref{cor:finrepdim} in relation to Iyama's construction (recall Example \ref{ex:iyama}) is that, by varying the length functions on $\mathcal{A}=\Mod{\Lambda}$ (i.e.~by attributing different `weights' to the simple modules), one is usually able to generate various complements $X'$ of a fixed finitely generated $\Lambda$-module and several algebras $\Gamma=\End{\Lambda}{X\oplus X'}$ of finite global dimension. The next examples illustrate how the method described in Corollary \ref{cor:finrepdim} can be used in pratice.
\begin{ex}
\label{ex:last}
Consider the bound quiver algebra $\Lambda=k Q/I$ in Example \ref{ex:easy} and, once again, denote the projective indecomposable $P_1$ by $X_1=X$. Let $\lambda$ be the length function on $\Mod{\Lambda}$ mapping $L_1$ to 1 and $L_2$ to 2. Then $\mu_{\lambda}(X_1)=\{1,2,5\}$ and, as in Example \ref{ex:easy},
\[
X_2=X_1/\Rej{X_1}{\mathcal{A}(<  \mu_{\lambda} (X_1))}=\begin{tikzcd}[ampersand replacement=\&, row sep=tiny, column sep=tiny,]
\& 1 \arrow[dash]{dr} \arrow[dash]{dl} \& \\
1  \& \& 2 
\end{tikzcd}.
\]
Note that $\mu_{\lambda} (X_2)=\{1,4\}$, 
\[
X_3=X_2/\Rej{X_2}{\mathcal{A}(<  \mu_{\lambda} (X_2))}=\begin{tikzcd}[ampersand replacement=\&, row sep=tiny, column sep=tiny]
 1 \arrow[dash]{d}  \\
2 
\end{tikzcd},
\]
$\mu_{\lambda}(X_3)=\{2,3\}$ and $X_4=0$. By Corollary \ref{cor:finrepdim}, the corresponding algebra $\Gamma=\End{\Lambda}{X_1 \oplus X_2 \oplus X_3}$ has global dimension at most $3$ and the top of $\End{\Lambda}{X_2,X_1\oplus X_2\oplus X_3}$ has projective dimension at most $2$.
\end{ex}
\begin{ex}
Consider the same algebra $\Lambda=k Q/I$ and the same module $X_1=P_1$ as before, but now let $\lambda'$ be the length function which sends $L_1$ to 2 and $L_2$ to 1. Then
\[
X_2=X_1/\Rej{X_1}{\mathcal{A}(<  \mu_{\lambda'} (X_1))}=\begin{tikzcd}[ampersand replacement=\&, row sep=tiny, column sep=tiny,]
1 \arrow[dash]{d}\\
1  \arrow[dash]{d}\\
1
\end{tikzcd}, \,
X_3=X_2/\Rej{X_1}{\mathcal{A}(<  \mu_{\lambda'} (X_2))}=\begin{tikzcd}[ampersand replacement=\&, row sep=tiny, column sep=tiny,]
1 \arrow[dash]{d}\\
1
\end{tikzcd},
\]
$X_4=L_1$ and $X_5=0$. The corresponding algebra $\Gamma$ differs from the ones in Examples \ref{ex:last} and \ref{ex:easy}.
\end{ex}
\begin{ex}
\label{ex:verylast}
Let $\Lambda$ be the bound quiver algebra $k Q/I$ where $I=\langle \alpha^3, \beta\alpha, \gamma\alpha\rangle$ and
\[Q=
\begin{tikzcd}[ampersand replacement=\&]
  \overset{1}{\circ} \arrow[loop left]{}{\alpha} \arrow[bend left]{r}{\beta}
  \arrow[bend right]{r}[swap]{\gamma}\&
    \overset{2}{\circ}
\end{tikzcd}.
\]
Let $X_1=X=\Lambda \oplus D(\Lambda)=P_1\oplus P_2 \oplus Q_1 \oplus Q_2$ be the `smallest' generator-cogenerator of $\Mod{\Lambda}$ (here $Q_i$ denotes the injective $\Lambda$-module with socle $L_i$). Let $\lambda$ be the usual length function on $\Mod{\Lambda}$, i.e.~the one sending both $L_1$ and $L_2$ to 1. By applying Corollary \ref{cor:finrepdim} to this data, we obtain $\ell=5$ and the basic version of the corresponding module $\overline{X}$ is isomorphic to
\[P_1\oplus P_2 \oplus Q_1 \oplus Q_2 \oplus (Q_1/L_1) \oplus L_1 \oplus (P_1/L_1).\]
\end{ex}
\begin{ex}
Let $\Lambda$ and $X$ be as in Example \ref{ex:verylast}, but now consider the length function $\lambda'$ mapping $L_1$ to 1 and $L_2$ to 2. In this case, Corollary \ref{cor:finrepdim} yields $\ell=8$ and the basic version of the corresponding module $\overline{X}$ is isomorphic to
\[P_1\oplus P_2 \oplus Q_1 \oplus Q_2 \oplus (Q_1/L_1) \oplus L_1 \oplus (P_1/L_1) \oplus ((P_1/L_1)/L_1).\]
\end{ex}
\begin{ex}
Again, let $\Lambda$ and $X$ be as in Example \ref{ex:verylast}, and consider the length function $\lambda''$ sending $L_1$ to 2 and $L_2$ to 1. By  applying Corollary \ref{cor:finrepdim} to this data, we get $\ell=6$ and the basic version of the corresponding module $\overline{X}$ is isomorphic to 
\[P_1\oplus P_2 \oplus Q_1 \oplus Q_2 \oplus (Q_1/L_1) \oplus L_1.\]
\end{ex}
\begin{rem}
There exists a notion dual to that of Gabriel--Roiter measure, called the Gabriel--Roiter comeasure (see \cite[Appendix C]{RINGEL2005726}). By passing the results in this section to the opposite category, we obtain statements about right complete total rejective chains constructed using the Gabriel--Roiter comeasure.
\end{rem}
\bibliographystyle{plain}
\bibliography{QHAlg}

\end{document}